\newtheorem{lemma}{Lemma}
\newtheorem{theorem}{Theorem}
\begin{document}
\title{Octonionic presentation for the Lie group $SL(2,{\mathbb O})$
\thanks{Electronic version of an article published as Journal of Algebra and Its Applications, Vol. 13, No. 6 (2014) 1450017 (19 pages) DOI: 10.1142/S0219498814500170 \copyright~Copyright World Scientific Publishing Company http://www.worldscientific.com/worldscinet/jaa}
}
\author{Jean Pierre Veiro
\thanks{Universidad Sim\'on Bol\'{\i}var, Departamento de Matem\'aticas Puras y Aplicadas, Apartado 89000, Caracas 1080-A, Venezuela. Email: jpveiro@usb.ve}
}
\date{}
\maketitle
\begin{abstract}
The purpose of this paper is to provide an octonionic description of the Lie group $SL(2,{\mathbb O})$. The main result states that it can be obtained as a free group generated by invertible and determinant preserving transformations from $\mathfrak{h}_2({\mathbb O})$ onto itself. An interesting characterization is given for the generators of $G_2$. Also, explicit isomorphisms are constructed between the Lie algebras $\mathfrak{sl}(2,{\mathbb K})$, for ${\mathbb K}={\mathbb R}, {\mathbb C}, {\mathbb H}, {\mathbb O}$, and their corresponding Lorentz algebras.
\end{abstract}
\section{Introduction}
In 1984, Sudbery (see~\cite{sudbery}) was able to construct the Lie algebra $\mathfrak{sl}(2,{\mathbb O})$ in such a way that it generalizes the Lie algebras $\mathfrak{sl}(2,{\mathbb K})$ for ${\mathbb K}={\mathbb R}, {\mathbb C}, {\mathbb H}$. If $n$ is the dimension of ${\mathbb K}$, there are Lie algebra isomorphisms between $\mathfrak{sl}(2,{\mathbb K})$ and the Lorentz algebras $\mathfrak{so}(n+1,1)$ for ${\mathbb K}={\mathbb R}, {\mathbb C}, {\mathbb H}, {\mathbb O}$, respectively. They are of most interest given the relation pointed out by Evans (see~\cite{evans}) between the normed division algebras, ${\mathbb R}$, ${\mathbb C}$, ${\mathbb H}$, and ${\mathbb O}$ of dimensions one, two, four, and eight, respectively, and supersymmetric Yang-Mills theories in dimensions three, four, six, and ten. Understanding the octonions is of great importance since they are related to Majorana-Weyl spinors in ten dimensions.
\\

The purpose of this paper is to formulate an octonionic presentation of a Lie group named $SL(2,{\mathbb O})$. The conditions that approve this name are first, it must generalize the definitions of $SL(2,{\mathbb K})$ for ${\mathbb K}={\mathbb R}, {\mathbb C}, {\mathbb H}$ and second, its Lie algebra must be $\mathfrak{sl}(2,{\mathbb O})$. Manogue has provided, with Schray in~\cite{manogue-finite} and with Dray in~\cite{manogue-moebius,manogue-spinor}, a version of $SL(2,{\mathbb O})$ using {\it nested transformations}. Their work has served as inspiration for the approach developed in this paper. One of the benefits of the construction given here is that it shows, at the group level, why in the Lie algebra $\mathfrak{sl}(2,{\mathbb O})$ the factor $\mathfrak{so}(7)$ must be split into commutator maps and $\mathfrak{g}_2$. This separation is also discussed in the work of Manogue {\it et al.} (see~\cite{manogue-finite,manogue-moebius,manogue-spinor}).
\\

The main result states the following:
\begin{quote}
The set of all invertible and determinant preserving transformations $
\phi_M
\:\colon
{\mathfrak h}_2({\mathbb O})\to{\mathfrak h}_2({\mathbb O})
$ defined by
\[
X\longmapsto\frac{1}{2}\left(\left(MX\right)M^\dagger+M\left(XM^\dagger\right)\right)
\]
where $M=\begin{pmatrix}a&b\\c&d\end{pmatrix}$ and $X\in{\mathfrak h}_2({\mathbb O})$, generate a free group with the operation given by composition. It is also a Lie group whose Lie algebra is precisely $\mathfrak{sl}(2,{\mathbb O})$.
\end{quote}
It is in agreement with the construction of $SL(2,{\mathbb O})$ provided by Manogue {\it et al.} in~\cite{manogue-finite,manogue-moebius,manogue-spinor}. The proof uses all the previous results of Section~\ref{groupSL2O}, the isomorphism $\mathfrak{sl}(2,{\mathbb O})\cong\mathfrak{so}(9,1)$ given in Section~\ref{isoOct} and Lemma~\ref{tangenG2}. Lemma~\ref{tangenG2} is of particular interest since it provides infinitesimal generators for $G_2$, in terms of octonionic operations, such that their tangent vectors are precisely the canonical derivations in $\mathfrak{g}_2$.
\\

The outline of this paper is as follows. Section~\ref{prelim} is devoted to $\mathfrak{g}_2$ and $G_2$. Two different bases are given for $\mathfrak{g}_2$; the first being used by Sudbery while the second is used by Manogue {\it et al.} Lemma~\ref{uxu-1} can be found in a book written by Conway \& Smith (see~\cite[page 98]{conway}) although in a different context, with different notation and distinct proof. Lemma~\ref{uxu-1} can also be found in~\cite[pages 3754--3755]{manogue-finite} in terms of rotations. The characterization given in Lemma~\ref{tangenG2} has not been found in the literature and is used in the proof of the main theorem. In Section~\ref{isos}, the isomorphisms between $\mathfrak{sl}(2,{\mathbb K})$ and $\mathfrak{so}(n+1,1)$ for ${\mathbb K}={\mathbb R}, {\mathbb C}, {\mathbb H}, {\mathbb O}$ and $n={\rm dim}({\mathbb K})$, respectively, are shown in detail. Special attention is given to the case of the octonions since it will be a key element in the construction of $SL(2,{\mathbb O})$. The last section contains an octonionic presentation for $SL(2,{\mathbb O})$, some technical lemmas, as well as the main result.
\section{The Octonions and their Automorphisms}\label{prelim}
An algebra is a vector space over the real numbers, $V$, with a bilinear map $m\colon V\times V\to V$ and a nonzero element $1\in V$ such that $m(1,x)=m(x,1)=x$ for all $x\in V$. This bilinear map is called a multiplication in $V$ and will simply be written as $xy$, instead of $m(x,y)$. A normed division algebra is an algebra with a norm that satisfies $||xy||=||x||\,||y||$ for all $x,y\in V$. Normed division algebras are division algebras in the sense that if $xy=0$ then $x=0$ or $y=0$.
\\

A well known theorem due to Hurwitz states that any normed division algebra over the real numbers is isomorphic to either the real numbers, ${\mathbb R}$, the complex numbers, ${\mathbb C}$, the quaternions, ${\mathbb H}$, or the octonions, ${\mathbb O}$. Proofs for this theorem can be found in \cite{conway,schafer}, and a lovely construction of the normed division algebras in \cite{baez}.
\\

These algebras are the first four of a sequence generated by the Cayley-Dickson process. It generalizes the construction of the complex numbers starting from the set of real numbers. A complete description of the Cayley-Dickson algebras is in \cite{schafer,baez}. The quaternions are no longer commutative while the octonions also lack associativity. Since the associator
\[
[a,b,c]=(ab)c-a(bc)
\]
is an alternating trilinear map, any subalgebra of the octonions generated by only two elements is associative. The general statement is known as Artin's Theorem, see \cite[page 29]{schafer}.
\\

Given a normed division algebra ${\mathbb K}$, its group of proper\footnote{Proper automorphisms refer to those satisfying $\phi(xy)=\phi(x)\,\phi(y)$ for all $x,y\in{\mathbb K}$.} automorphisms is denoted by ${\rm Aut}({\mathbb K})$. Therefore, ${\rm Aut}({\mathbb R})\cong\{{\rm id}\}$ is the trivial group, ${\rm Aut}({\mathbb C})\cong{\mathbb Z}_2$ is given by the identity and complex conjugation, ${\rm Aut}({\mathbb H})\cong SO(3)$ are the orientation preserving rotations over the pure imaginary quaternions, and ${\rm Aut}({\mathbb O})\cong G_2$ is the 14 dimensional exceptional Lie group.
\\

Multiplication of octonions follows from the distributive law and a multiplication table for the basis elements illustrated in the following Fano plane.
\begin{figure}[th]
\begin{center}
\vspace*{-.3cm}
\includegraphics[width=4.8cm]{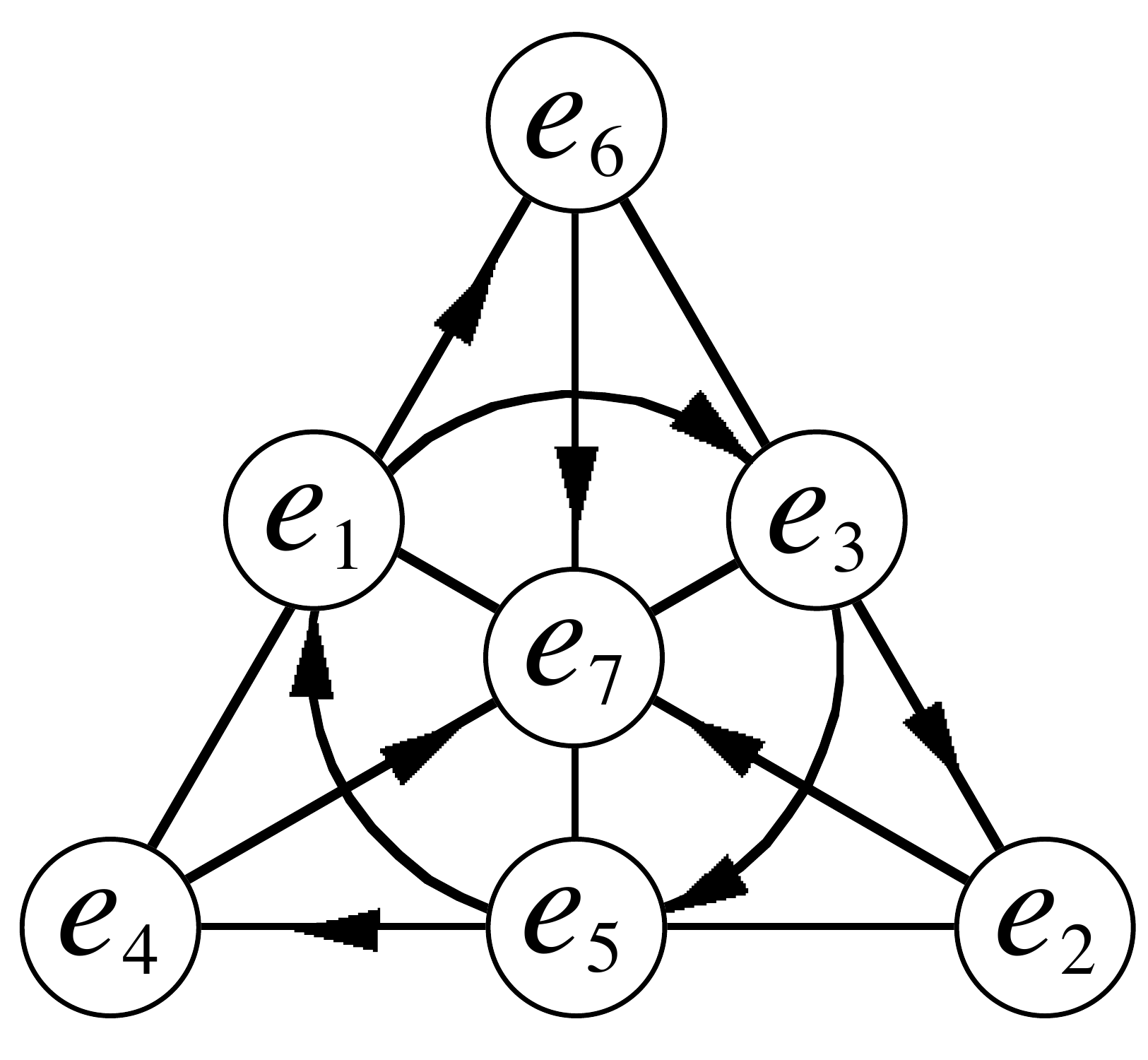}
\vspace*{-.35cm}
\caption{Fano Plane representing the multiplication table for the octonions.}
\vspace*{-.3cm}
\end{center}
\end{figure}
The result of the product of two elements is the only other element that shares the line passing through the first two, and the sign is given by the arrows. For example, $e_3\,e_5=e_1$ while $e_1\,e_4=-e_6$. The basis element $e_0=1$ is left outside the diagram because it commutes and associates with all other elements.
\\

The octonions satisfy the Moufang identities; for any octonions $x,y,z\in{\mathbb O}$, the following relations hold.
\begin{eqnarray*}
z(x(zy))&=&(zxz)y\\
((xz)y)z&=&x(zyz)\\
(zx)(yz)&=&z(xy)z
\end{eqnarray*}
As a consequence of the Moufang identities it is also true that
\[
(xy)z=(xz^{-1})(zyz)\qquad\mbox{and}\qquad z(xy)=(zxz)(z^{-1}y)
\]
whenever $z\neq0$.
\\

Besides alternativity and power associativity, the following properties hold.
\begin{eqnarray*}
&\left[\bar{x},y\right]=-[x,y]=\overline{[x,y]}&\\
&\left[\bar{x},y,z\right]=-[x,y,z]=\overline{[x,y,z]}&
\end{eqnarray*}
Another interesting relation is
\[
6\,[x,y,z]=[[x,y],z]+[[y,z],x]+[[z,x],y].
\]
It shows why the imaginary part of the quaternions, with multiplication given by the commutator, form a Lie algebra and the same done with the octonions fails.
\\

The derivation algebra over the octonions is $\mathfrak{Der}({\mathbb O})\cong\mathfrak{g}_2$; it is the Lie algebra tangent to the group of automorphisms ${\rm Aut}({\mathbb O})\cong G_2$. Two bases for $\mathfrak{g}_2$ will be exhibited. Consider the following derivations
\begin{eqnarray}
D_{a,b}(x)&=&\big([L_a,L_b]+[L_a,R_b]+[R_a,R_b]\big)(x)\nonumber\\
&=&\frac{1}{2}\big([[a,x],b]+[a,[b,x]]+[[a,b],x]\big)\label{Dab}\\
&=&[[a,b],x]-3[a,b,x]\nonumber
\end{eqnarray}
defined for any pair of octonions $a$ and $b$. They satisfy the Generalized Jacobi Identity,
\[
D_{a,b}\left(D_{c,d}\right)=D_{D_{a,b}(c),d}+D_{c,D_{a,b}(d)}+D_{c,d}\left(D_{a,b}\right)
\]
and the following linear dependence relations
\[
D_{e_i,e_j}+D_{e_m,e_n}+D_{e_r,e_s}=0
\]
where the indices are all different and such that $e_ie_j=e_me_n=e_re_s=e_k$ for $k=1,\ldots,7$, ruling out seven of the possible 21 transformations and leaving a set of 14 elements for the basis. Another way to construct a basis for $\mathfrak{g}_2$ is considering transformations of the form
\[
R_{ij}(x)={\rm Re}(xe_j)\,e_i-{\rm Re}(xe_i)\,e_j.
\]
These can be understood as tangent vectors to rotations in the plane determined by $e_i$ and $e_j$. Nonetheless, these transformations are not derivations over the octonions. The reason is that, at the group level, in order to obtain an automorphism it is necessary to rotate in certain two planes simultaneously. Consider the following transformations in $\mathfrak{g}_2$,
\[
F^k_{ij}=R_{ia}+R_{jb}
\]
where the indices $a$ and $b$ are such that $e_ie_a=e_k=-e_je_b$. Each fixed $k\in\{1,\ldots,7\}$ determines six transformations that span a subspace in $\mathfrak{g}_2$ of dimension two. This leads to 14 transformations that are linearly independent in $\mathfrak{g}_2$ and, thus, a basis. For example, the set
\[
\left\{
F^{1}_{24},\,F^{1}_{25},\,F^{2}_{51},\,F^{2}_{53},\,F^{3}_{21},\,F^{3}_{27},\,F^{4}_{23},\,F^{4}_{13},\,F^{5}_{17},\,F^{5}_{12},\,F^{6}_{31},\,F^{6}_{71},\,F^{7}_{36},\,F^{7}_{16}
\right\}
\]
is a basis for $\mathfrak{g}_2$.
\\

Each basis has its own set of infinitesimal generators. For the latter we shall visualize $\mathfrak{g}_2$ lying in $\mathfrak{so}(7)\subset\mathfrak{so}(8)$. Noticing that for any vector belonging to this basis we have
\[
\big(F^k_{ij}\big)^3=-F^k_{ij},\quad\big(F^k_{ij}\big)^4=-\big(F^k_{ij}\big)^2,\quad\big(F^k_{ij}\big)^5=F^k_{ij},\quad\ldots
\]
and so on, the exponential map determines infinitesimal generators for a realization of $G_2$ inside $SO(7)\subset SO(8)$ by
\[
{\rm exp}\big(t\,F^k_{ij}\big)={\rm Id}+\sin(t)\big(F^k_{ij}\big)+(1-\cos(t)){\big(F^k_{ij}\big)}^2.
\]

The former basis is related to a trickier characterization of generators for $G_2$. The following Lemma allows us to determine when does an automorphism of octonions adopt the same type of expression as those for the quaternions.
\begin{lemma}\label{uxu-1}
If $u$ is a nonzero octonion then the transformation
\[
x\longmapsto u\,x\,u^{-1}
\]
is an automorphism of octonions if, and only if, $u^3$ is real.
\end{lemma}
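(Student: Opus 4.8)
The plan is to reduce to a unit octonion and then read off the action of conjugation as a rotation. First I would observe that both hypotheses are invariant under $u\mapsto\lambda u$ with $\lambda\in\mathbb{R}\setminus\{0\}$: the map $\phi_u$ is unchanged and $u^3$ is merely rescaled by the real number $\lambda^3$. Hence I may assume $\|u\|=1$, so that $u^{-1}=\bar u$, and write $u=\cos\theta+\sin\theta\,\hat u$ with $\hat u$ a unit imaginary octonion. Since the real subalgebra generated by $\hat u$ is a copy of $\mathbb{C}$ (here $\hat u^2=-1$), de Moivre gives $u^3=\cos 3\theta+\sin 3\theta\,\hat u$, so that $u^3\in\mathbb{R}$ if and only if $\sin 3\theta=0$. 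By Artin's Theorem the expression $u\,x\,u^{-1}$ is unambiguous for every $x$, because $u$, $x$ and $u^{-1}=2\,{\rm Re}(u)-u$ all lie in the associative subalgebra generated by the two elements $u$ and $x$.

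Next I would compute $\phi_u$ explicitly. It fixes $1$, and it fixes $\hat u$ since $\hat u$ commutes with $u$. For an imaginary $x$ orthogonal to $\hat u$ one has $x\hat u=-\hat u x$ and $(\hat u x)\hat u=x$, and working inside the associative subalgebra generated by $\hat u$ and $x$ a short expansion yields
\[
\phi_u(x)=\cos 2\theta\,x+\sin 2\theta\,(\hat u x).
\]
Writing $J=L_{\hat u}$ for left multiplication by $\hat u$ on the six-dimensional space $W=\{1,\hat u\}^{\perp}$, this says $\phi_u|_W=\cos 2\theta\,{\rm Id}+\sin 2\theta\,J$; since $J^2=-{\rm Id}$ on $W$, the pair $(W,J)$ is a copy of $\mathbb{C}^3$ and $\phi_u|_W$ is multiplication by the unit complex scalar $\lambda=e^{2i\theta}$. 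Thus $\phi_u$ fixes the subalgebra $\mathbb{C}_{\hat u}=\mathbb{R}\oplus\mathbb{R}\hat u$ pointwise and acts as $\lambda$ on $W$.

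To test the homomorphism identity $\phi_u(xy)=\phi_u(x)\,\phi_u(y)$ I would use bilinearity to reduce to the products coming from the splitting $\mathbb{O}=\mathbb{C}_{\hat u}\oplus W$. Products inside $\mathbb{C}_{\hat u}$ are preserved because $\phi_u$ is the identity there; mixed products are preserved because $J$ commutes with $\lambda$. The whole question therefore concentrates on products of two elements of $W$. For $x,y\in W$ I would split $xy=(xy)_{\mathbb{C}}+(xy)_W$: the component $(xy)_{\mathbb{C}}$ has real part $-\langle x,y\rangle$ and $\hat u$-part $\langle \hat u x,y\rangle\,\hat u$, which together form a Hermitian form on $(W,J)$; being invariant under the unitary scalar $\lambda$ and fixed by $\phi_u$, this component always matches. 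The decisive part is $(xy)_W$: the induced product $W\times W\to W$ is the $SU(3)$-invariant complex cross product, which is \emph{conjugate}-linear in each variable with respect to $J$. Granting this, $\phi_u(x)\phi_u(y)$ and $\phi_u(xy)$ have equal $\mathbb{C}_{\hat u}$-parts while their $W$-parts are $\bar\lambda^2\,(xy)_W$ and $\lambda\,(xy)_W$ respectively. As this product is not identically zero, equality for all $x,y$ holds precisely when $\bar\lambda^2=\lambda$, i.e. (using $|\lambda|=1$) when $\lambda^3=1$, i.e. $e^{6i\theta}=1$, which is exactly $\sin 3\theta=0$, i.e. $u^3\in\mathbb{R}$.

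The step I expect to be the main obstacle is the claimed conjugate-linearity of the $W$-valued multiplication, $((\hat u x)y)_W=-\hat u\,(xy)_W$; this is the one place where the non-associativity of $\mathbb{O}$ genuinely enters, since the naive cancellation $u^{-1}u=1$ in the middle of $(uxu^{-1})(uyu^{-1})$ fails. I would establish it either from the explicit Fano multiplication table adapted to complex coordinates built from the pairs $(e_i,\hat u e_i)$, or structurally from the decomposition of $\mathbb{O}$ as $\mathbb{C}\oplus\mathbb{C}^3$ under the stabilizer $SU(3)=\{g\in G_2:g(\hat u)=\hat u\}$, in which $\phi_u|_W$ is the central scalar $\lambda\,{\rm Id}$ and lies in $SU(3)$ exactly when its determinant $\lambda^3$ equals $1$. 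A useful sign check: a complex-\emph{bi}linear product would instead force $\lambda=1$ and hence only $\theta\in\pi\mathbb{Z}$, contradicting that $\theta=\pi/3$ (where $u=\tfrac12+\tfrac{\sqrt3}{2}\hat u$ and $u^3=-1$) does give an automorphism; so conjugate-linearity is both necessary and the crux of the argument.
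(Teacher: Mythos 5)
Your proposal is correct, but it takes a genuinely different route from the paper. The paper's proof is a short, self-contained Moufang-identity computation: applying $(xyx)(x^{-1}z)=x(yz)$ and $x(yz)x=(xy)(zx)$-type identities, it shows $\phi_u$ is multiplicative if and only if $[u^3,a,b]=0$ for all $a,b\in{\mathbb O}$, and then a separate elementary lemma identifies the ``associator center'' $\{x:[x,a,b]=0\;\forall a,b\}$ with ${\mathbb R}$; no normalization, no polar form, and no structure theory of $G_2$ is needed. You instead normalize to $\|u\|=1$, write $u=e^{\theta\hat u}$, compute that $\phi_u$ fixes ${\mathbb C}_{\hat u}$ pointwise and acts as the unitary scalar $\lambda=e^{2i\theta}$ on $W=\{1,\hat u\}^{\perp}$ with complex structure $J=L_{\hat u}$, and localize the entire obstruction in the $W$-component of the product $W\times W\to W$, which is conjugate-linear in each slot, forcing $\bar\lambda^{2}=\lambda$, i.e.\ $\lambda^{3}=1$, i.e.\ $\sin 3\theta=0$, i.e.\ $u^{3}\in{\mathbb R}$. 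Both arguments are sound; what the paper's buys is brevity and complete self-containment, while yours buys insight: it explains geometrically why the answer is the $\theta=k\pi/3$ cone discussed after the lemma, identifies the condition as the determinant constraint $\det(\lambda\,{\rm Id})=\lambda^{3}=1$ for the stabilizer $SU(3)=\{g\in G_2:g(\hat u)=\hat u\}$, and connects directly to the generators $u=\tfrac12+\tfrac{\sqrt3}{2}\hat r$ used later in the paper. Two small points to tidy up: the decisive conjugate-linearity of the cross-product part is only sketched, and you should actually discharge it --- either by the basis check you propose (it does verify on any Fano table) or by the representation-theoretic observation that the product is an $SU(3)$-equivariant map and $\Lambda^{2}\mathbf{3}\cong\bar{\mathbf{3}}$, so no complex-bilinear such map exists; and the degenerate case $u\in{\mathbb R}$ (where $\hat u$ is undefined) should be noted separately, though it is trivial since then $\phi_u={\rm id}$ and $u^{3}\in{\mathbb R}$. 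Your sanity check at $\theta=\pi/3$ is a nice touch, as it correctly rules out the bilinear alternative.
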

\begin{proof}
Artin's Theorem will constantly be used throughout the proof. The Moufang identity $(xyx)(x^{-1}z)=x(yz)$ shows that
\[
(u\,a\,u^{-1})(u\,b\,u^{-1})=u^{-1}\left((u^2a)(bu^{-1})\right)
\]
for any given octonions $a,b,u\in{\mathbb O}$. Therefore, the transformation that sends $x$ into $u\,x\,u^{-1}$ is an automorphism for the octonions if, and only if,
\[
u(ab)u^{-1}=u^{-1}\left((u^2a)(bu^{-1})\right)
\]
for all $a,b\in{\mathbb O}$. Multiplying on the right by $u$ and on the left by $u^2$ yields
\[
u^3(ab)=u\left((u^2a)(bu^{-1})\right)u
\]
while the Moufang identity $x(yz)x=(xy)(yz)$ rewrites the expression's right hand side as $(u^3a)b$. Thus, the given transformation is an automorphism for the octonions if, and only if,
\[
\left[u^3,a,b\right]=0
\]
for all $a,b\in{\mathbb O}$. Using the next Lemma, this last condition holds if, and only if, $u^3$ is real.
\end{proof}
\begin{lemma}
$[x,a,b]=0$ for all $a,b\in{\mathbb O}$ if, and only if, $x\in{\mathbb R}$.
\end{lemma}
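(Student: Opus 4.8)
The plan is to dispatch the two implications separately, with essentially all the work lying in the forward direction.

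For the reverse implication, suppose $x\in{\mathbb R}$, say $x=\lambda\cdot 1$ with $\lambda\in{\mathbb R}$. Then bilinearity of the multiplication gives $(xa)b=\lambda(ab)$ and $x(ab)=\lambda(ab)$, so $[x,a,b]=0$ for all $a,b\in{\mathbb O}$. In other words, the reals sit in the nucleus of ${\mathbb O}$, and this direction needs nothing beyond the algebra axioms.

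For the forward implication, assume $[x,a,b]=0$ for every $a,b\in{\mathbb O}$ and write $x=\mathrm{Re}(x)+\mathrm{Im}(x)$. Since $\mathrm{Re}(x)$ is a real multiple of $1$, the reverse implication just proved shows $[\mathrm{Re}(x),a,b]=0$, whence $[\mathrm{Im}(x),a,b]=0$ for all $a,b$. It therefore suffices to prove that a purely imaginary octonion $y$ satisfying $[y,a,b]=0$ for all $a,b$ must vanish; equivalently, that the alternating trilinear associator has no nonzero imaginary octonion in its kernel. To establish this I would expand $y=\sum_{i=1}^{7}x_i e_i$ and use trilinearity to write $[y,e_j,e_k]=\sum_{i=1}^{7}x_i\,[e_i,e_j,e_k]$. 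For distinct basis units the associator $[e_i,e_j,e_k]$ vanishes exactly when $e_i,e_j,e_k$ lie on a common line of the Fano plane (a quaternionic triple, where multiplication is associative) and otherwise equals $\pm 2e_\ell$ for the basis unit $e_\ell$ forced by the multiplication table. Choosing, for each coordinate $x_i$, a pair $(e_j,e_k)$ for which $e_i,e_j,e_k$ do not lie on a common line, the hypothesis $[y,e_j,e_k]=0$ yields a homogeneous linear system in $x_1,\dots,x_7$; reading the coefficients off the Fano plane shows that its only solution is $x_1=\cdots=x_7=0$, so $y=0$ and hence $x=\mathrm{Re}(x)\in{\mathbb R}$.

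A more conceptual route avoids coordinates: since automorphisms preserve products they preserve associators, so the set of nuclear elements is invariant under ${\rm Aut}({\mathbb O})\cong G_2$, and $G_2$ acts transitively on the unit imaginary octonions; it would then suffice to exhibit a single pair $a,b$ with $[e_1,a,b]\neq0$, e.g. by reading one nonassociative triple off the Fano plane. I expect the main obstacle to be the bookkeeping in the direct computation: one must verify that the collection of associator equations is genuinely nondegenerate across all seven coordinates rather than merely nonzero on some of them — that is, that the associator $3$-form on the imaginaries has trivial null space — which is routine but requires care with the signs supplied by the Fano diagram.
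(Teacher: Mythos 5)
Your main argument is correct and is essentially the paper's own proof: the paper likewise evaluates the hypothesis on pairs of basis units, uses that each condition $[x,e_i,e_j]=0$ confines $x$ to the quaternionic subalgebra ${\rm span}\{e_0,e_i,e_j,e_ie_j\}$ (your observation that the associator of distinct basis units vanishes exactly on Fano lines), and combines three such conditions to force $x\in{\mathbb R}$ --- the same homogeneous system you describe, merely organized as an intersection of quaternionic subalgebras rather than as a coefficient matrix. Your $G_2$-transitivity alternative would be a genuinely different (and valid) route, but since you offer it only as an aside, the substance of your proposal coincides with the paper's computation.
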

\begin{proof}
First suppose that $[x,a,b]=0$ for all $a,b\in{\mathbb O}$. Given a basis for the octonions, $\{e_0,e_1,\ldots,e_7\}$ with $e_0=1$, choose different indices $i,j,n,m\in\{1,\ldots,7\}$ such that $e_ie_j=e_ne_m=e_k$. Then, $e_ie_n=e_l\neq e_k$. Since $[x,a,b]=0$ for all $a,b\in{\mathbb O}$, then particularly
\begin{eqnarray*}
\left[x,e_i,e_j\right]=0&\Rightarrow &x=x_0 e_0+x_i e_i+x_j e_j+x_k e_k,\\
\left[x,e_n,e_m\right]=0&\Rightarrow &x=x_0 e_0+x_n e_n+x_m e_m+x_k e_k,\\
\left[x,e_i,e_n\right]=0&\Rightarrow &x=x_0 e_0+x_i e_i+x_n e_n+x_l e_l.
\end{eqnarray*}
For $x$ to satisfy $\left[x,e_i,e_j\right]=0$ and $\left[x,e_n,e_m\right]=0$ simultaneously, $x$ must be of the form $x=x_0 e_0+x_k e_k$. That is, $x_i=x_j=x_n=x_m=0$. If $x$ must also satisfy $\left[x,e_i,e_n\right]=0$, since $e_l\neq x_k$, then necessarily $x_l=0$ and $x_k=0$. Thus $x$ can only be real. Conversely, if $x\in{\mathbb R}$, it is easily seen that $[x,a,b]=0$ for all $a,b\in{\mathbb O}$.
\end{proof}

It is interesting to interpret $u^3\in{\mathbb R}$ geometrically. Considering $u=u_0 e_0+\vec{u}$, where $\vec{u}=u_1 e_1+\cdots+u_7 e_7$, then
\[
u^3=u_0^3-3u_0||\vec{u}||^2+\left(3u_0^2-||\vec{u}||^2\right)\vec{u}
\]
and $u^3\in{\mathbb R}$ if either $\vec{v}=0$ or $3u_0^2-||\vec{u}||^2=0$. Representing the real part of the octonions as a straight line and the imaginary part as a plane that is perpendicular to such line, then the octonions $u=u_0 e_0+\vec{u}$ such that $3u_0^2-||\vec{u}||^2=0$ can be identified with a cone passing through the origin whose axis is the real line.
\\

Being interested in transformations of the form $x\mapsto u\,x\,u^{-1}$, $u$ cannot be equal to zero. Also, if $w=\lambda\,u$ with $\lambda\in{\mathbb R}$ then $w\,x\,w^{-1}=u\,x\,u^{-1}$. Therefore, it is possible to quotient the union of the octonions that are real with the set of octonions $u$ such that $3u_0^2=||\vec{u}||^2$ by the relation $u\sim w$ if $w=\lambda\, u$ for some real number $\lambda$ to obtain representatives of these transformations. The entire real line can be identified with the identity transformation. Unfortunately this element is disconnected from the rest. In order to construct nontrivial curves passing through the identity in $G_2$, a composition of at least two transformations must be considered. Due to nonasociativity, composition of these transformations cannot be rewritten as just one transformation of the same kind. More precisely, if $u$ and $v$ are two octonions such that $u^3\in{\mathbb R}$ and $v^3\in{\mathbb R}$ then $v\,(u\,x\,u^{-1})\,v^{-1}=w\,x\,w^{-1}$, for some octonion $w$ that also satisfies $w^3\in{\mathbb R}$, only when $[v,u,x]=0$ for all $x\in{\mathbb O}$.
\\

Consider the following notation. Given two imaginary octonions $a,b\in{\mathbb O}$ (with ${\rm Re}(a)={\rm Re}(b)=0$) that anti-commute, define
\begin{equation}\label{u(t)}
u(t)=\frac{1}{2}+\frac{\sqrt{3}}{2\left|\left|a+\frac{4}{3}||a||^2\,t\,b\right|\right|}\left(a+\frac{4}{3}||a||^2\,t\,b\right).
\end{equation}
Notice that $||u(t)||=1$ and $(u(t))^3\in{\mathbb R}$ for all $t\in{\mathbb R}$. Hence, both the maps $x\mapsto u(t)\,x\,(u(t))^{-1}$ and $x\mapsto (u(t))^{-1}\,x\,u(t)$ are in $G_2$ for any value of $t$. Since $u$ depends on the choice of $a$ and $b$, define $G_{a,b}^t$ as the transformation that sends $x$ to
\begin{equation}\label{gut}
(u(t))^{-1}\Big(u(0)\Big(u(t)\Big((u(0))^{-1}\,x\,u(0)\Big)(u(t))^{-1}\Big)(u(0))^{-1}\Big)u(t).
\end{equation}
Clearly, $G_{a,b}^t\in G_2$, for all $t\in{\mathbb R}$, since it is the composition of transformations in $G_2$. Also, $G_{a,b}^0$ is always the identity element. In fact, $G_{a,b}^t$ is a curve in $G_2$ that passes through the the identity at $t=0$.
\begin{lemma}\label{tangenG2}
Given two nonzero octonions, $a,b\in{\mathbb O}$ such that ${\rm Re}(a)={\rm Re}(b)=0$ and $ab=-ba$, consider the curve $G_{a,b}^t$ defined by~\eqref{gut} where $u(t)$ is as in~\eqref{u(t)}. Then the tangent vector to this curve at the identity is $D_{a,b}$, as defined in~\eqref{Dab}.
\end{lemma}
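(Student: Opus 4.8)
The plan is to eliminate the non-associativity by passing to the associative algebra $\mathrm{End}(\mathbb{O})$ of linear maps of $\mathbb{O}$. Since each factor $u(t)^{\pm 1}$ is a unit octonion with $(u(t))^3\in\mathbb{R}$, Lemma~\ref{uxu-1} shows that conjugation $C_w\colon x\mapsto w\,x\,w^{-1}$ is a well-defined automorphism (bracketing-independent, by Artin), hence an element of $G_2\subset GL(\mathbb{O})$. Reading~\eqref{gut} from the inside out identifies
\[
G_{a,b}^t=C_{u(t)}^{-1}\circ C_{u(0)}\circ C_{u(t)}\circ C_{u(0)}^{-1},
\]
the group commutator of $C_{u(t)}^{-1}$ and $C_{u(0)}$ in $G_2$; in particular $G_{a,b}^0=\mathrm{id}$ since $C_{u(0)}^{-1}\circ C_{u(0)}=\mathrm{id}$.

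Writing $g(t)=C_{u(t)}$ and $g_0=g(0)$ and differentiating this commutator inside the associative algebra $\mathrm{End}(\mathbb{O})$, the telescoping at $t=0$ leaves
\[
\frac{d}{dt}\Big|_{t=0}G_{a,b}^t=\dot g_0\circ g_0^{-1}-g_0^{-1}\circ\dot g_0 ,\qquad \dot g_0=\frac{d}{dt}\Big|_{t=0}C_{u(t)} .
\]
This is automatically a derivation, since it equals $(\mathrm{Id}-\mathrm{Ad}_{g_0^{-1}})(\dot g_0\circ g_0^{-1})$ and $\dot g_0\circ g_0^{-1}$ is tangent at the identity to the $G_2$-curve $C_{u(t)}\circ g_0^{-1}$. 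Next I would compute $\dot u(0)$ from~\eqref{u(t)}: with $v(t)=a+\tfrac43\|a\|^2 t\,b$, the hypotheses $\mathrm{Re}(a)=\mathrm{Re}(b)=0$ and $ab=-ba$ give $a\perp b$, so $\tfrac{d}{dt}\|v(t)\|$ vanishes at $t=0$ and only the numerator survives, yielding $\dot u(0)=\kappa\,b$ with $\kappa=\tfrac{2}{\sqrt3}\|a\|$. Since $u(0)=\tfrac12+\tfrac{\sqrt3}{2}\hat a$ with $\hat a=a/\|a\|$, its coordinates $(\alpha,\beta)=(\tfrac12,\tfrac{\sqrt3}{2})$ satisfy the ``cone'' relation $3\alpha^2=\beta^2$ forced by $u(0)^3\in\mathbb{R}$.

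I would then use $C_w=L_w R_{\overline w}$ to express $g_0$, $g_0^{-1}$, and $\dot g_0$ as polynomials in the multiplication operators $P=L_{\hat a}$, $Q=R_{\hat a}$, $M=L_b$, $N=R_b$. Expanding the commutator, the zeroth order cancels and the result reduces to a fixed combination of four operator commutators, schematically
\[
\|a\|\Big(\tfrac14[C,A]+\tfrac{\sqrt3}{4}[PQ,A]+\tfrac{\sqrt3}{4}[B,C]+\tfrac34[B,PQ]\Big),
\]
where $A=M-N$, $C=P-Q$, and $B=MQ+PN$. Evaluating these via the linearized alternative laws — flexibility ($PQ=QP$, $P^2=Q^2=-\mathrm{Id}$), the anticommutations $PM=-MP$ and $QN=-NQ$ coming from $\hat a\,b=-b\,\hat a$, and the associator identities for the mixed $L$–$R$ products — together with the values of $(\alpha,\beta,\kappa)$, the expression should collapse to $[L_a,L_b]+[L_a,R_b]+[R_a,R_b]$, which is exactly $D_{a,b}$ by~\eqref{Dab}.

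The main obstacle is this last simplification: because of non-associativity the operators $P,Q,M,N$ do not combine freely (for instance $L_{\hat a}L_b\neq L_{\hat a b}$), so the mixed $L$–$R$ products must be handled carefully with the linearized alternative and Moufang identities. It is precisely the special data — the cone condition $3\alpha^2=\beta^2$ that keeps $u(t)$ inside $G_2$, and the normalization $\kappa=\tfrac{2}{\sqrt3}\|a\|$ built into~\eqref{u(t)} — that makes the unwanted contributions cancel (one checks, for example, that the two $\sqrt3$-weighted commutators annihilate each other on the quaternionic subalgebra generated by $a,b$) and leaves exactly the canonical derivation $D_{a,b}$ rather than some other element of $\mathfrak{g}_2$ or a bare commutator map.
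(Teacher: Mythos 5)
Your proposal is correct, and it takes a genuinely different route from the paper's. The paper differentiates the nested expression~\eqref{gut} directly as octonion arithmetic, obtaining four product-rule terms that are then massaged with the Moufang identities into a commutator-plus-associator form (Eq.~\eqref{dGu}) before substituting $u_0$ and $\dot u$ and finishing with $aba=\|a\|^2b$. You instead linearize first: since each $C_{u(t)}$ lies in $G_2\subset\mathrm{End}(\mathbb{O})$ (Lemma~\ref{uxu-1} plus Artin), $G^t_{a,b}$ is the group commutator $g(t)^{-1}g_0\,g(t)\,g_0^{-1}$, so the differentiation step collapses to the standard $\dot g_0 g_0^{-1}-g_0^{-1}\dot g_0=[\dot g_0,g_0^{-1}]$, and your $\mathrm{Ad}$ argument shows \emph{a priori} that the tangent vector is a derivation---a structural point the paper does not make. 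Your intermediate data all check out: $\dot u(0)=\tfrac{2\|a\|}{\sqrt3}\,b$ is right (orthogonality forced by anticommutation kills the norm derivative, just as ${\rm Re}(a\bar b)=0$ kills the corresponding term in the paper's $\dot u$), and expanding $g_0^{-1}=\tfrac14\,\mathrm{Id}-\tfrac{\sqrt3}{4}(P-Q)-\tfrac34 PQ$ and $\dot g_0=\tfrac{2\|a\|}{\sqrt3}\bigl(\tfrac12(M-N)-\tfrac{\sqrt3}{2}(MQ+PN)\bigr)$ reproduces your four-commutator combination with exactly the coefficients you state. What your route buys is a trivial differentiation step and a priori membership in $\mathfrak{g}_2$; what it costs is that the Moufang work the paper does once on octonions resurfaces as operator identities in the final reduction (e.g.\ $PMP=L_{\hat a b\hat a}=M$ is the operator avatar of the paper's $aba=\|a\|^2b$, needed for the quartic terms in $[B,PQ]$). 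Two cautions on the one step you leave schematic: the $\sqrt3$-weighted part $[PQ,A]+[B,C]$ must be shown to vanish on \emph{all} of $\mathbb{O}$, not merely on the quaternionic subalgebra generated by $a,b$, since the resulting operator has to agree with $D_{a,b}$ on every $x$ (it does vanish identically, but your sample check there is necessary, not sufficient); and in the rational part the purely associative computation yields only $[B,PQ]=[P,M]+[Q,N]$, whereas matching $D_{a,b}=[P,M]+[P,N]+[Q,N]$ against $\tfrac14[C,A]=\tfrac14\bigl([P,M]+[Q,N]\bigr)-\tfrac12[P,N]$ requires the genuinely octonionic correction $[B,PQ]=[P,M]+[Q,N]+2[P,N]$, to be extracted from the mixed identity $[P,N]=[Q,M]$ together with the Moufang reductions. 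With those two finite verifications written out, your argument is complete and lands on $D_{a,b}$ as in~\eqref{Dab}, in agreement with the paper.
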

\begin{proof}
The map that assigns $t\mapsto u(t)$ is differentiable. Also, since $||u(t)||=1$ for all $t\in{\mathbb R}$, then $(u(t))^{-1}=\overline{u(t)}$. When $t=0$ only two octonionic directions are involved in~\eqref{gut} and therefore, Artin's Theorem may be applied to reduce this expression to the identity transformation. The set of transformations $G_{a,b}^t$, for $t$ belonging to any open interval around zero, define differentiable curves in $G_2$. The following calculations will reveal its tangent vector. For simplicity, allow $\dot{u}$ and $u_0$ to denote ${\left.\frac{{\rm d}}{{\rm d}t}u(t)\right|}_{t=0}$ and $u(0)$ respectively.
\begin{eqnarray*}
{\left.\frac{{\rm d}}{{\rm d}t}G_{a,b}^t(x)\right|}_{t=0}
&=&-\big(u_0^{-1}\,\dot{u}\,u_0^{-1}\big)(u_0\,x)+\Big(\dot{u}\big(u_0^{-1}\,x\,u_0\big)\Big)u_0^{-1}\\
&&-(x\,u_0)\big(u_0^{-1}\,\dot{u}\,u_0^{-1}\big)+u_0^{-1}\Big(\big(u_0\,x\,u_0^{-1}\big)\dot{u}\Big)
\end{eqnarray*}
Using the Moufang identities and properties related with the associator, the previous expression is equal to
\begin{equation}\label{dGu}
\Big[\big[\dot{u},u_0^{-1}\big],x\Big]
+\Big[\big(u_0-u_0^{-1}\big)\big(\dot{u}\,u_0^{-1}\big)+\big(u_0^{-1}\,\dot{u}\big(u_0-u_0^{-1}\big),u_0^{-1},x\Big].
\end{equation}
From~\eqref{u(t)}, explicit calculations determine
\[u_0=u(0)=\frac{1}{2}+\frac{\sqrt{3}}{2||a||}\,a
\qquad{\rm and}\qquad
\dot{u}=-\frac{2{\rm Re}(a\bar{b})}{\sqrt{3}||a||}+\frac{2||a||}{\sqrt{3}}\,b.\]
Replacing these values of $u_0$ and $\dot{u}$ in~\eqref{dGu}, simplifying, and using the fact that $aba=-a^2b=||a||^2b$, since $ab+ba=0$, yields
\[
{\left.\frac{{\rm d}}{{\rm d}t}G_{a,b}^t(x)\right|}_{t=0}=[[a,b],x]-3[a,b,x]=D_{a,b}(x)
\]
for all $x\in{\mathbb O}$. Thus the tangent vector to the curve $G_{a,b}^t$, when passing through the identity, is the derivation $D_{a,b}$ for the given $a,b\in{\mathbb O}$.
\end{proof}
If $i,j\in\{1,\ldots,7\}$ with $i\neq j$ then $a=e_i$ and $b=e_j$ satisfy the conditions in Lemma~\ref{tangenG2}. This allows us to exhibit the basis $\{D_{e_i,e_j}\}$ for $\mathfrak{g}_2$ as the set of vectors tangent to the curves $G_{e_i,e_j}^t$ at $t=0$.
\\

Noticing that Eq.~\eqref{u(t)} can be rewritten as $u=\frac{1}{2}+\frac{\sqrt{3}}{2}\hat{r}$ with $\hat{r}\in S^6$, shows how this equation is equivalent to Eq.~(31) in~\cite{manogue-finite}. Manogue \& Schray characterize the automorphisms of the form $x\mapsto u\,x\,u^{-1}$ by expressing the octonion $u$ in polar coordinates. They conclude that the set of transformations that map $x$ to $u\,x\,\bar{u}$, with $u=\frac{1}{2}+\frac{\sqrt{3}}{2}\hat{r}$ is a generating set for $G_2$. In order to obtain curves that pass through the identity they consider maps of the form $\phi^{(2)}_{(i,j,\theta|\frac{\pi}{3})}$; each of them being composition of two maps in the generating set, where one depends on the parameter $\theta$ and the other is fixed so that the curve passes through the identity for $\theta=0$. These curves are related to the ones defined by~\eqref{gut}, namely $G^t_{a,b}$, in the following way. Choosing $a=e_i$, $b=e_j$, and writting $t=\frac{3}{4}{\rm tan}(\theta)$ for $\theta\in(-\frac{\pi}{2},\frac{\pi}{2})$, then
\[
{G^t_{a,b}=\phi^{(2)}_{(i,j,\pi+\theta|\frac{\pi}{3})}\circ\phi^{(2)}_{(i,j,\theta|\frac{\pi}{3})}}_{.}
\]
They later consider the space spanned by the vectors $\left\{\left.\frac{{\rm d}}{{\rm d}\theta}\phi^{(2)}_{(i,j,\theta|\frac{\pi}{3})}\right|_{\theta=0}\right\}$ with $i\neq j$ and going through all the indices of the imaginary part of ${\mathbb O}$. Using the matrix representation of $G_2$, Manogue \& Schray are able to extract 14 linearly independent vectors and thus obtain a basis for $\mathfrak{g}_2$. Unfortunately, these vectors are not easily expressed in terms of the derivations $D_{a,b}$ nor of the transformations $F^k_{i\,j}$ listed above.
\section{The Lorentz algebras and their isomorphic special linear algebras}\label{isos}
Consider ${\mathbb R}^{n+1}$ with the following quadratic form
\[
Q(x)=x_0^2-x_1^2-x_2^2-\cdots-x_n^2
\]
where $x=(x_0,x_1,\ldots,x_n)\in{\mathbb R}^{n+1}$. Using the polarization identity, the symmetric bilinear form $f$, associated to the quadratic form $Q$, can be recovered. Then,
\begin{eqnarray*}
f(x,x^\prime)&=&\frac{1}{4}\big(Q(x+x^\prime)-Q(x-x^\prime)\big)\\
&=&x_0x_0^\prime-x_1x_1^\prime-x_2x_2^\prime-\cdots-x_n^\prime
\end{eqnarray*}
for any $x,x^\prime\in{\mathbb R}^{n+1}$. Notice that $f$ is non-degenerate. Such bilinear form is called the Lorentz bilinear form. It can be expressed
\[
f(x,x^\prime)=x\,\eta\,x^\prime
\]
where $x$ is written as a row, $x^\prime$ as a column, and $\eta$ is the diagonal $(n+1)\times(n+1)$ matrix $\eta={\rm diag}(1,-1,-1,\ldots,-1)$.
\\

Notice that $f(x,x^\prime)=\langle\eta\,x,x^\prime\rangle$. The group of invertible linear transformations over ${\mathbb R}^{n+1}$ that preserve the Lorentz bilinear form is called the Lorentz group of ${\mathbb R}^{n+1}$, and will be denoted by
\[
O(n,1)=\left\{\Lambda\in GL(n+1,{\mathbb R})\;:\;f(\Lambda\,x,\Lambda\,x^\prime)=f(x,x^\prime),\;\forall\,x,x^\prime\in{\mathbb R}^{n+1}\right\}
\]
where $GL(n+1,{\mathbb R})$ is the general linear group of $(n+1)\times(n+1)$ matrices with entries in ${\mathbb R}$. Elements in the Lorentz group, $\Lambda\in O(n,1)$, can be characterized by the following condition
\[
\Lambda^T\,\eta\,\Lambda=\eta.
\]
The Lorentz group is obviously a Lie group. Its connected component containing the identity is denoted by $SO(n,1)$ and is called the Orthochronous Lorentz group.
\\

The Lorentz algebra, $\mathfrak{so}(n,1)$, tangent to $SO(n,1)$ is
\[
\mathfrak{so}(n,1)=\left\{\omega\in{\rm M}_{n+1}({\mathbb R})\;:\;\eta\,\omega=-(\eta\,\omega)^T\right\}
\]
where ${\rm M}_{n+1}({\mathbb R})$ stands for $(n+1)\times(n+1)$ matrices with entries in ${\mathbb R}$. The Lie bracket in $\mathfrak{so}(n,1)$ is given by $[\omega,\omega^\prime]=\omega\,\omega^\prime-\omega^\prime\,\omega$.
\subsection{The isomorphism $\mathfrak{sl}(2,{\mathbb R})\cong\mathfrak{so}(2,1)$}
The Lie algebra $\mathfrak{sl}(2,{\mathbb R})$ consists of $2\times2$ matrices with entries in ${\mathbb R}$ and trace equal to zero. Define $\varphi\colon\mathfrak{sl}(2,{\mathbb R})\to\mathfrak{so}(2,1)$ by
\[
\varphi
\begin{pmatrix*}[r]
a&b\\
c&-a
\end{pmatrix*}
=
\begin{pmatrix}
0&b+c&2a\\
b+c&0&c-b\\
2a&b-c&0
\end{pmatrix}_.
\]
It is easy to verify that $\varphi$ is a Lie algebra isomorphism. The next subsection will show why this expression for $\varphi$ is, by all means, natural.
\subsection{The isomorphism $\mathfrak{sl}(2,{\mathbb C})\cong\mathfrak{so}(3,1)$}
Let $x\in{\mathbb R}^4$ and consider the following identification
\[
x=(x_0,x_1,x_2,x_3)\:\longleftrightarrow\: X=
\begin{pmatrix}
x_0+x_3&x_1+x_2\,i\\
x_1-x_2\,i&x_0-x_3
\end{pmatrix}
\]
between vectors in ${\mathbb R}^4$ and $2\times2$ hermitian matrices with entries in ${\mathbb C}$, denoted by $\mathfrak{h}_2({\mathbb C})$. Notice that
\[
{\rm det}(X)=Q(x)=f(x,x),
\]
thus it is of interest to preserve the determinant of the matrix $X$.
\\

Let $M\in M_2({\mathbb C})$, a $2\times2$ matrix with entries in ${\mathbb C}$. Consider the Hermitian action of $M$ on $X$ as shown in the following map
\[
X\longmapsto MXM^\dagger
\]
where $\dagger$ represents the transpose of the complex conjugate matrix. Writing $X^\prime=MXM^\dagger$,
\[
\det(X^\prime)=\det(MXM^\dagger)=\det(MM^\dagger)\det(X)=\|\det(M)\|^2\det(X).
\]
Noticing that $\|\det(e^{i\theta}M)\|^2=\|\det M\|^2$ for any value of $\theta$ and that both $e^{i\theta}M$ and $M$ produce the same transformation, it can be concluded that imposing $M$ to have determinant equal to one is sufficient to characterize the determinant preserving Hermitian action. That is, $M\in SL(2,{\mathbb C})$, the Lie group of matrices $A\in M_2({\mathbb C})$ such that ${\rm det}(A)=1$.
\\

At the Lie algebra level, $\mathfrak{sl}(2,{\mathbb C})$ is the set of traceless $2\times2$ matrices over the complex numbers, ${\rm Tr}(N)=0$, with the Lie bracket given by matrix commutation. Wishing to send tangent vectors, $\vec{X}$, with base point $X\in\mathfrak{h}_2({\mathbb C})$ onto tangent vectors with base point $X^\prime=MXM^\dagger$, it is necessary for $N\in\mathfrak{sl}(2,{\mathbb C})$ to transform as follows,
\[
\vec{X}\longmapsto N\vec{X}+\vec{X}N^\dagger=\vec{X}^\prime.
\]
Writing $N=\begin{pmatrix}\alpha_0+\alpha_1i&\beta_0+\beta_1i\\ \gamma_0+\gamma_1i&\alpha_0-\alpha_1i\end{pmatrix}$ and $\vec{X}$ the same way as $X$, then the above transformation corresponds to the following
\[
\begin{pmatrix}
0&\beta_0+\gamma_0&\beta_1-\gamma_1&2\alpha_0\\
\gamma_0+\beta_0&0&-2\alpha_1&\gamma_0-\beta_0\\
\beta_1-\gamma_1&2\alpha_1&0&-\beta_1-\gamma_1\\
2\alpha_0&\beta_0-\gamma_0&\beta_1+\gamma_1&0
\end{pmatrix}
\begin{pmatrix}
x_0\\
x_1\\
x_2\\
x_3
\end{pmatrix}
=
\begin{pmatrix}
x_0^\prime\\
x_1^\prime\\
x_2^\prime\\
x_3^\prime
\end{pmatrix}
\]
that is actually a transformation belonging to $\mathfrak{so}(3,1)$. Therefor, define $\varphi\colon\mathfrak{sl}(2,{\mathbb C})\to\mathfrak{so}(3,1)$ by
\[
\varphi
\begin{pmatrix}
\alpha_0+\alpha_1i&\beta_0+\beta_1i\\
\gamma_0+\gamma_1i&-\alpha_0-\alpha_1i
\end{pmatrix}
={
\begin{pmatrix}
0&\beta_0+\gamma_0&\beta_1-\gamma_1&2\alpha_0\\
\gamma_0+\beta_0&0&-2\alpha_1&\gamma_0-\beta_0\\
\beta_1-\gamma_1&2\alpha_1&0&-\beta_1-\gamma_1\\
2\alpha_0&\beta_0-\gamma_0&\beta_1+\gamma_1&0
\end{pmatrix}
}_.
\]
It is easy to see that $\varphi$ is, in fact, a Lie algebra isomorphism.
\subsection{The isomorphism $\mathfrak{sl}(2,{\mathbb H})\cong\mathfrak{so}(5,1)$}
Repeating the ideas from the previous section, $\mathfrak{sl}(2,{\mathbb H})$ cannot be defined using as a condition that ${\rm Tr}(A)=0$ if $A\in M_2({\mathbb H})$. The reason is that if the Lie bracket is taken as the matrix commutator, then ${\rm Tr}([A,B])$ is not necessarily equal to zero even when both ${\rm Tr}(A)$ and ${\rm Tr}(B)$ are equal to zero. Weakening this condition to just ${\rm Re}({\rm Tr}(A))=0$, that is, the trace having its real part equal to zero, does close the algebra under matrix commutation. It will be shown that this is a natural definition for $\mathfrak{sl}(2,{\mathbb H})$.
\\

Let $x\in{\mathbb R}^6$ and consider the following identification
\[
x\!\!=\!\!(x_0,x_1,\ldots,x_5)\:\longleftrightarrow\: X\!\!=\!\!
\begin{pmatrix}
x_0+x_5&x_1+x_2\,i+x_3\,j+x_4\,k\\
x_1-x_2\,i-x_3\,j-x_4\,k&x_0-x_5
\end{pmatrix}
\]
between points in ${\mathbb R}^6$ and $2\times2$ hermitian matrices with entries in ${\mathbb H}$, denoted by $\mathfrak{h}_2({\mathbb H})$. Notice the determinant is well defined in $\mathfrak{h}_2({\mathbb H})$ and
\[
{\rm det}(X)=Q(x)=f(x,x^\prime).
\]
As before, consider the Hermitian action of $M=\begin{pmatrix}a&b\\ c&d\end{pmatrix}\in M_2({\mathbb H})$ on $X$ by
\[
X\longmapsto MXM^\dagger=X^\prime.
\]
Fortunately, despite quaternions are non commutative, the following equality remains true
\begin{eqnarray*}
{\rm det}(X^\prime)
&=&{\rm det}(MXM^\dagger)\\
&=&\left(\|a\|^2\|d\|^2+\|b\|^2\|c\|^2-2\,{\rm Re}\left(a\bar{c}d\bar{b}\right)\right){\rm det}(X)\\
&=&{\rm det}(MM^\dagger)\,{\rm det}(X).
\end{eqnarray*}
Notice the determinant of $MM^\dagger$ is well defined because $MM^\dagger\in\mathfrak{h}_2({\mathbb H})$. Hence, define
\[
SL(2,{\mathbb H})=\left\{A\in M_2({\mathbb H})\;:\;{\rm det}(AA^\dagger)=1\right\}
\]
which is a Lie subgroup of $M_2({\mathbb H})$. This is a good moment to remark that changing the condition ${\rm det}(AA^\prime)=1$ for ${\rm Re}({\rm det}(A))=1$ fails to determine a group. The group's Lie algebra is, in fact,
\[
\mathfrak{sl}(2,{\mathbb H})=\left\{N\in M_2({\mathbb H})\;:\;{\rm Re}({\rm Tr}(N))=0\right\}
\]
with the Lie bracket given by matrix commutation, as wished for at the beginning of this subsection.
\\

As in the previous subsection, matrices $N\in\mathfrak{sl}(2,{\mathbb H})$ transform tangent vectors as follows
\[
\vec{X}\longmapsto N\vec{X}+\vec{X}N^\dagger.
\]
Writing $N=\begin{pmatrix}\alpha&\beta\\ \gamma&\delta\end{pmatrix}$ with $\alpha=\alpha_0\alpha_1\,i+\alpha_2\,j+\alpha_3\,k$ and $\beta$, $\gamma$, and $\delta$ the same way, the above transformation can be associated with a matrix in $\mathfrak{so}(5,1)$, namely
\[
\varphi(N)
=
{
\left(
\begin{smallmatrix}
0 & \beta_0+\gamma_0 & \beta_1-\gamma_1 & \beta_2-\gamma_2 & \beta_3-\gamma_3 & 2\alpha_0 \\
\beta_0+\gamma_0 & 0 & \delta_1-\alpha_1 & \delta_2-\alpha_2 & \delta_3-\alpha_3 & \gamma_0-\beta_0 \\
\beta_1-\gamma_1 & \alpha_1-\delta_1 & 0 & -(\alpha_3+\delta_3) & \alpha_2+\delta_2 & -(\beta_1+\gamma_1) \\
\beta_2-\gamma_2 & \alpha_2-\delta_2 & \alpha_3+\delta_3 & 0 & -(\alpha_1+\delta_1) & -(\beta_2+\gamma_2) \\
\beta_3-\gamma_3 & \alpha_3-\delta_3 & -(\alpha_2+\delta_2) & \alpha_1+\delta_1 & 0 & -(\beta_3+\gamma_3) \\
2\alpha_0 & \beta_0-\gamma_0 & \beta_1+\gamma_1 & \beta_2+\gamma_2 & \beta_3+\gamma_3 & 0
\end{smallmatrix}
\right)
}_{.}
\]
The map $\varphi\colon\mathfrak{sl}(2,{\mathbb H})\to\mathfrak{so}(5,1)$ is a Lie algebra isomorphism.
\subsection{The isomorphism $\mathfrak{sl}(2,{\mathbb O})\cong\mathfrak{so}(9,1)$}\label{isoOct}
This is by far the most interesting of all four isomorphisms shown in this section. When the normed division algebra is the octonions, there is an additional complication compared to the previous subsection. Working with the real part of the trace equal to zero does not close the algebra under multiplication given by matrix commutation, moreover, the Jacobi identity is no longer valid. Thus, a detour must be taken in order to define a Lie algebra suitably called $\mathfrak{sl}(2,{\mathbb O})$. Sudbery's work \cite{sudbery} will be followed very closely to define an appropriate vector space.
\\

A Jordan algebra, $J$, is a commutative algebra such that
\[
x\circ(y\circ x^2)=(x\circ y)\circ x^2
\]
for all $x,y\in J$. Consider the Lie algebra generated by all the Jordan multiplications $L_x(y)=x\circ  y=R_x(y)$. This algebra, named by Sudbery as the structure algebra of $J$, is $J$'s multiplication algebra (see \cite{schafer}). Since Sudbery's work is being followed here, denote this algebra by $\mathfrak{Str}(J)$.
\\

When dealing with a semi-simple Jordan algebra, the structure algebra can be expressed as
\[
\mathfrak{Str}(J)=\mathfrak{Der}(J)\oplus L(J)
\]
where $\mathfrak{Der}(J)$ are the derivations over $J$, $L(J)$ are the Jordan multiplications, and $\oplus$ refers to the direct sum of vector spaces. The algebra of interest for what follows is the reduced structure algebra, denoted by $\mathfrak{Str}^\prime(J)$, which is obtained from the structure algebra after canceling the multiples of the identity in $L(J)$.
\\

In the previous subsections it has been noticed that $\mathfrak{h}_2({\mathbb K})$ is ideal for describing vectors in light-cone coordinates. Best of all, $\mathfrak{h}_2({\mathbb K})$ with the product given by
\[
X\circ Y=\frac{1}{2}(XY+YX),
\]
for all $X,Y\in\mathfrak{h}_2({\mathbb K})$, is a semi-simple Jordan algebra.
\\

The fundamental ingredient when studying $\mathfrak{Str}^\prime(J)$ is $\mathfrak{Der}(J)$. Notice first that if $X,Y\in\mathfrak{h}_2({\mathbb K})$ then
\[
[A,\{X,Y\}]=\{[A,X],Y\}+\{X,[A,Y]\}
\]
holds for any $2\times2$ matrix $A$, where $\{X,Y\}=2(X\circ Y)$. If it is wished that $[A,\{X,Y\}]\in\mathfrak{h}_2({\mathbb K})$ then it is necessary for $A$ to be an anti-hermitian matrix. That is, $A\in\mathfrak{a}_2({\mathbb K})=\{M\in M_2({\mathbb K})\;:\;M^\dagger=-M\}$. Therefore, ${\rm ad}\,A(X)=[A,X]$ is a Jordan derivation over $\mathfrak{h}_2({\mathbb K})$. Furthermore, derivations over ${\mathbb K}$ are derivations of $\mathfrak{h}_2({\mathbb K})$ considering their action over the entries of the matrix.
\\

Using the prime sign to denote traceless matrices or pure imaginary elements of the normed division algebra, $M_2^\prime({\mathbb K})=\{M\in M_2({\mathbb K})\;:\;{\rm Tr}(M)=0\}$ and ${\mathbb K}^\prime=\{x\in{\mathbb K}\;:\;{\rm Re}(x)=0\}$, notice that $M_2^\prime({\mathbb K})=\mathfrak{a}_2^\prime({\mathbb K})+\mathfrak{h}_2^\prime({\mathbb K})$. Writing $\mathfrak{a}_2({\mathbb K})=\mathfrak{a}_2^\prime({\mathbb K})\oplus{\mathbb K}^\prime I$, where $I$ is the $2\times2$ identity matrix, it can be verified that
\[
{\rm ad}\,\mathfrak{a}_2({\mathbb K})={\rm ad}\,\mathfrak{a}_2^\prime({\mathbb K})+{\rm ad}\,({\mathbb K}^\prime I)={\rm ad}\,\mathfrak{a}_2^\prime({\mathbb K})+C({\mathbb K}^\prime),
\]
where $C({\mathbb K}^\prime)=\{C_a=L_a-R_a\;:\;a\in{\mathbb K}^\prime\}$. Hence, the following decomposition
\[
\mathfrak{Der}(\mathfrak{h}_2({\mathbb K}))=\mathfrak{a}_2^\prime({\mathbb K})+C({\mathbb K}^\prime)+\mathfrak{Der}({\mathbb K}).
\]
holds at the vector space level. When pairing with $L(\mathfrak{h}_2({\mathbb K}))$ and after canceling the multiples of the identity, the reduced structure algebra of the Jordan algebra $\mathfrak{h}_2({\mathbb K})$ is given by
\begin{eqnarray*}
\mathfrak{Str}^\prime(\mathfrak{h}_2({\mathbb K}))&\cong&\mathfrak{a}_2^\prime({\mathbb K})\oplus C({\mathbb K}^\prime)\oplus\mathfrak{Der}({\mathbb K})\oplus\mathfrak{h}_2^\prime({\mathbb K})\\
&\cong&M_2^\prime({\mathbb K})\oplus C({\mathbb K}^\prime)\oplus\mathfrak{Der}({\mathbb K})
\end{eqnarray*}
as vector spaces.
\\

To see if this definition makes sense, compare it with the cases that are already known. For ${\mathbb K}={\mathbb R}, {\mathbb C}, {\mathbb H}$, $\mathfrak{Str}^\prime(\mathfrak{h}_2({\mathbb K}))$ is the same vector space as $\mathfrak{sl}(2,{\mathbb K})$. Hence, this is a good generalization of $\mathfrak{sl}(2,{\mathbb K})$ for the octonions. Moreover, $\mathfrak{Str}^\prime(\mathfrak{h}_2({\mathbb O}))$ as a vector space over the real numbers has dimension 45; the same as $\mathfrak{so}(9,1)$.
\\

The next step is constructing a bracket that will provide $\mathfrak{Str}^\prime(\mathfrak{h}_2({\mathbb O}))$ with a Lie algebra structure. To do this, identify the transformations in $\mathfrak{Str}^\prime(\mathfrak{h}_2({\mathbb O}))$ acting over matrices $X\in\mathfrak{h}_2({\mathbb O})$ with transformations in $\mathfrak{so}(9,1)$. Then, the bracket in $\mathfrak{Str}^\prime(\mathfrak{h}_2({\mathbb O}))$ will be given so that such correspondence is automatically a Lie algebra isomorphism. That is, the bracket in $\mathfrak{sl}(2,{\mathbb O})=M_2^\prime({\mathbb O})\oplus C({\mathbb O}^\prime)\oplus\mathfrak{Der}({\mathbb O})$ will be defined through
\[
\big[{\cal M},{\cal N}\big]=\varphi^{-1}\big(\varphi({\cal M})\varphi({\cal N})-\varphi({\cal N})\varphi({\cal M})\big)
\]
for all ${\cal M}, {\cal N}\in\mathfrak{sl}(2,{\mathbb O})$, where $\varphi\colon\mathfrak{sl}(2,{\mathbb O})\to\mathfrak{so}(9,1)$ is given by the following identifications.
\\

Let $N=\begin{pmatrix*}[r]a&b\\ c&-a\end{pmatrix*}\in M_2^\prime({\mathbb O})$ and $X=\begin{pmatrix}x_0+x_9&x\\ \bar{x}&x_0-x_9\end{pmatrix}\in\mathfrak{h}_2({\mathbb O})$ where $a$, $b$, and $c$ are expanded over the standard basis for the octonions and identify the vector $(x_0,x_1,\ldots,x_9)\in{\mathbb R}^{10}$ with the matrix $X$ as before. The transformation given by
\[
X\longmapsto N\,X+X\,N^\dagger
\]
corresponds to the following transformation in $\mathfrak{so}(9,1)$,
\[
\left(
\begin{smallmatrix}
0 & b_0+c_0 & b_1-c_1 & b_2-c_2 & b_3-c_3 & b_4-c_4 & b_5-c_5 & b_6-c_6 & b_7-c_7 & 2a_0 \\
b_0+c_0 & 0 & -2a_1 & -2a_2 & -2a_3 & -2a_4 & -2a_5 & -2a_6 & -2a_7 & c_0-b_0 \\
b_1-c_1 & 2a_1 & 0 & 0 & 0 & 0 & 0 & 0 & 0 & -(c_1+b_1) \\
b_2-c_2 & 2a_2 & 0 & 0 & 0 & 0 & 0 & 0 & 0 & -(c_2+b_2) \\
b_3-c_3 & 2a_3 & 0 & 0 & 0 & 0 & 0 & 0 & 0 & -(c_3+b_3) \\
b_4-c_4 & 2a_4 & 0 & 0 & 0 & 0 & 0 & 0 & 0 & -(c_4+b_4) \\
b_5-c_5 & 2a_5 & 0 & 0 & 0 & 0 & 0 & 0 & 0 & -(c_5+b_5) \\
b_6-c_6 & 2a_6 & 0 & 0 & 0 & 0 & 0 & 0 & 0 & -(c_6+b_6) \\
b_7-c_7 & 2a_7 & 0 & 0 & 0 & 0 & 0 & 0 & 0 & -(c_7+b_7) \\
2a_0 &b_0-c_0 & b_1+c_1 & b_2+c_2 & b_3+c_3 & b_4+c_4 & b_+c_5 & b_6+c_6 & b_7+c_7 & 0
\end{smallmatrix}
\right)
\]
which shall be the definition for $\varphi(N)$. Elements $C_d\in C({\mathbb O}^\prime)$ and $\mathfrak{g}\in\mathfrak{Der}({\mathbb O})$ transform $X$ acting over its entries. Writing $d=d_1\,e_1+\cdots+d_7\,e_7$, the transformation $C_d$ and $\mathfrak{g}$ correspond with
\[
2
\left(
\begin{smallmatrix}
0&0&\hspace*{.25cm}0&\hspace*{.25cm}0&\hspace*{.25cm}0&\hspace*{.25cm}0&\hspace*{.25cm}0&\hspace*{.25cm}0&\hspace*{.25cm}0&0\\
0&0&\hspace*{.25cm}0&\hspace*{.25cm}0&\hspace*{.25cm}0&\hspace*{.25cm}0&\hspace*{.25cm}0&\hspace*{.25cm}0&\hspace*{.25cm}0&0\\
0&0&\hspace*{.25cm}0&-d_7&-d_5&\hspace*{.25cm}d_6&\hspace*{.25cm}d_3&-d_4&\hspace*{.25cm}d_2&0\\
0&0&\hspace*{.25cm}d_7&\hspace*{.25cm}0&\hspace*{.25cm}d_6&\hspace*{.25cm}d_5&-d_4&-d_3&-d_1&0\\
0&0&\hspace*{.25cm}d_5&-d_6&\hspace*{.25cm}0&-d_7&-d_1&\hspace*{.25cm}d_2&\hspace*{.25cm}d_4&0\\
0&0&-d_6&-d_5&\hspace*{.25cm}d_7&\hspace*{.25cm}0&\hspace*{.25cm}d_2&\hspace*{.25cm}d_1&-d_3&0\\
0&0&-d_3&\hspace*{.25cm}d_4&\hspace*{.25cm}d_1&-d_2&\hspace*{.25cm}0&-d_7&\hspace*{.25cm}d_6&0\\
0&0&\hspace*{.25cm}d_4&\hspace*{.25cm}d_3&-d_2&-d_1&\hspace*{.25cm}d_7&\hspace*{.25cm}0&-d_5&0\\
0&0&-d_2&\hspace*{.25cm}d_1&-d_4&\hspace*{.25cm}d_3&-d_6&\hspace*{.25cm}d_5&\hspace*{.25cm}0&0\\
0&0&\hspace*{.25cm}0&\hspace*{.25cm}0&\hspace*{.25cm}0&\hspace*{.25cm}0&\hspace*{.25cm}0&\hspace*{.25cm}0&\hspace*{.25cm}0&0
\end{smallmatrix}
\right)
\qquad{\rm and}\qquad
\left(
\begin{smallmatrix}
0&0&0&0&0&0&0&0&0&0\\
0&0&0&0&0&0&0&0&0&0\\
0&0&&&&&&&&0\\
0&0&&&&&&&&0\\
0&0&&&&\uparrow&&&&0\\
0&0&&&\leftarrow&\mathfrak{g}&\rightarrow&&&0\\
0&0&&&&\downarrow&&&&0\\
0&0&&&&&&&&0\\
0&0&&&&&&&&0\\
0&0&0&0&0&0&0&0&0&0
\end{smallmatrix}
\right)
\]
in $\mathfrak{so}(9,1)$, where $\mathfrak{g}$ is identified with its matrix representation inside $\mathfrak{so}(7)$. These matrices determine $\varphi(C_d)$ and $\varphi(\mathfrak{g})$ respectively.
\\

Since it is intended to construct a Lie bracket on $\mathfrak{sl}(2,{\mathbb O})$ in order for it to be consistent with the Lie bracket in $\mathfrak{so}(9,1)$, the above linear identifications given by $\varphi$ provide the following inherited Lie algebra structure. Writing $M=\begin{pmatrix*}[r]w&y\\ z&-w\end{pmatrix*}$, for $M,N\in M_2^\prime({\mathbb O})$, $C_d,C_{d^\prime}\in C({\mathbb O}^\prime)$, and $\mathfrak{g},\mathfrak{g}^\prime\in\mathfrak{Der}({\mathbb O})$, the Lie bracket in $\mathfrak{sl}(2,{\mathbb O})$ is given by
\begin{eqnarray*}
\left[C_d,C_{d^\prime}\right]&=&C_d\,C_{d^\prime}-C_{d^\prime}\,C_d\\
\left[\mathfrak{g},\mathfrak{g}^\prime\right]&=&\mathfrak{g}\,\mathfrak{g}^\prime-\mathfrak{g}^\prime\,\mathfrak{g}\\
\left[\mathfrak{g},C_d\right]&=&C_{\mathfrak{g}(d)}\\
\left[C_d,M\right]&=&\begin{pmatrix*}[r]C_d(w)&C_d(y)\\ C_d(z)&-C_d(w)\end{pmatrix*}\\
\left[\mathfrak{g},M\right]&=&\begin{pmatrix*}[r]\mathfrak{g}(w)&\mathfrak{g}(y)\\ \mathfrak{g}(z)&-\mathfrak{g}(w)\end{pmatrix*}\\
\left[M,N\right]&=&\left(M\,N-N\,M-\frac{1}{2}{\rm Tr}(MN-NM)\,I\right)+C_{\frac{1}{6}{\rm Tr}(MN-NM)}+\mathfrak{g}_{M,N}
\end{eqnarray*}
where $\mathfrak{g}_{M,N}\in\mathfrak{Der}({\mathbb O})$ stands for $\mathfrak{g}_{M,N}:=\frac{2}{3}D_{w,a}+\frac{1}{3}D_{y,b}+\frac{1}{3}D_{z,c}$.
\\

The way this bracket has been constructed automatically guarantees the Jacobi identity, making $\mathfrak{sl}(2,{\mathbb O})$ a Lie algebra and $\varphi\colon\mathfrak{sl}(2,{\mathbb O})\to\mathfrak{so}(9,1)$ a Lie algebra isomorphism.
\section{The Lie group $SL(2,{\mathbb O})$}\label{groupSL2O}
The previous section suggests $SL(2,{\mathbb O})$ be constructed as a group of transformations acting over the vector space $\mathfrak{h}_2({\mathbb O})$. Despite lack of associativity on behalf of the octonions, composition of transformations is still an associative operation.
\\

Given $X=\begin{pmatrix}\alpha&x\\ \bar{x}&\beta\end{pmatrix}\in\mathfrak{h}_2({\mathbb O})$ and $M=\begin{pmatrix}a&b\\ c&d\end{pmatrix}\in M_2({\mathbb O})$, consider the transformation
\begin{equation}\label{def-trans-hermit}
X\longmapsto \frac{1}{2}\Big((MX)M^\dagger+M(XM^\dagger)\Big)=\phi_M(X),
\end{equation}
which can be explicitly given as
\[
{
\left(
\begin{smallmatrix}
\alpha\,||a||^2+2\,{\rm Re}(ax\bar{b})+\beta\,||b||^2
&
\alpha\,a\bar{c}+\beta\,b\bar{d}+\frac{1}{2}\left(a(x\bar{d})+(ax)\bar{d}+b(\bar{x}\bar{c})+(b\bar{x})\bar{c}\right)
\\
\alpha\,c\bar{a}+\beta\,d\bar{b}+\frac{1}{2}\left((d\bar{x})\bar{a}+d(\bar{x}\bar{a})+(cx)\bar{b}+c(x\bar{b})\right)
&
\alpha\,||c||^2+2\,{\rm Re}(cx\bar{d})+\beta\,||d||^2
\end{smallmatrix}
\right)
}_{.}
\]
The set of all transformations defined by~\eqref{def-trans-hermit} generate a free monoid. The subset of all invertible transformations form the largest group contained in the monoid. $SL(2,{\mathbb O})$ shall denote the subgroup of invertible transformations that preserve the determinant.
\\

Since $MM^\dagger\in\mathfrak{h}_2({\mathbb O})$ its determinant is well defined and given by
\[
{\rm det}(MM^\dagger)=||a||^2||d||^2+||b||^2||c||^2-\left(a\bar{c}\right)\left(d\bar{b}\right)-\left(b\bar{d}\right)\left(c\bar{a}\right).
\]
Straight calculation shows that
\[
{\rm det}\left(\phi_M(X)\right)={\rm det}(MM^\dagger)\,{\rm det}(X)
\]
if, and only if,
\begin{equation}\label{det-cond}
\begin{split}
\left((ax)\bar{d}+a(x\bar{d})+(b\bar{x})\bar{c}+b(\bar{x}\bar{c})\right)\left(d(\bar{x}\bar{a})+(d\bar{x})\bar{a}+c(x\bar{b})+(cx)\bar{b}\right)\quad\\
-\left(a(x\bar{b})+(b\bar{x})\bar{a}+(ax)\bar{b}+b(\bar{x}\bar{a})\right)\left(c(x\bar{d})+(d\bar{x})\bar{c}+(cx)\bar{d}+d(\bar{x}\bar{c})\right)
\end{split}
\end{equation}
is equal to $4\,||x||^2\,{\rm det}(MM^\dagger)$ for all $x\in{\mathbb O}$.
\\

The following Lemma will be useful for simplifying the previous expression.
\begin{lemma}\label{lemma4oct}
For any four octonions, $a,b,c,d$, it is always true that
\[
2\,{\rm Re}(ab)\,{\rm Re}(cd)={\rm Re}\left((a\bar{c})(\bar{d}b)+(ad)(cb)\right).
\]
\end{lemma}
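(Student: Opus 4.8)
The statement is an identity in four octonions, and the natural strategy is to expand both sides in terms of real parts and reduce everything to properties of the real trace form $\langle x,y\rangle={\rm Re}(x\bar y)$. First I would rewrite the left-hand side using the standard fact that for octonions ${\rm Re}(ab)={\rm Re}(ba)$ and ${\rm Re}(ab)=\langle \bar a, b\rangle=\langle a,\bar b\rangle$, so that $2\,{\rm Re}(ab)\,{\rm Re}(cd)$ becomes a product of two real bilinear pairings. The goal is to massage the right-hand side, whose arguments $(a\bar c)(\bar d b)$ and $(ad)(cb)$ are genuine octonion products (and hence sensitive to nonassociativity), into the same real scalar.

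\emph{Key steps, in order.} Since only real parts appear, I would exploit the cyclic and reflection symmetries of ${\rm Re}$ aggressively: ${\rm Re}((xy)z)={\rm Re}(x(yz))$ (the associator is purely imaginary, from the relation $\overline{[x,y,z]}=-[x,y,z]$ quoted in Section~\ref{prelim}), so inside a single ${\rm Re}$ one may drop all parentheses and treat triple and quadruple products associatively. This is the crucial simplification: it means ${\rm Re}((a\bar c)(\bar d b))={\rm Re}(a\bar c\bar d b)$ and ${\rm Re}((ad)(cb))={\rm Re}(adcb)$ with unambiguous bracketing. Next I would apply the adjoint/reflection identity ${\rm Re}(uv)=\langle \bar u,v\rangle$ to fold each four-factor real part into an inner product of two octonions, and likewise write the left-hand side $2\langle \bar a,b\rangle\langle \bar c,d\rangle$. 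The remaining task is to verify the resulting scalar identity, which no longer involves any bracketing ambiguity and can be checked by the standard linearization of the composition law $\langle xy,xy\rangle=\langle x,x\rangle\langle y,y\rangle$, or equivalently by polarizing $\|xy\|^2=\|x\|^2\|y\|^2$ to obtain the braiding relation $\langle xy,xz\rangle=\langle x,x\rangle\langle y,z\rangle$ and its symmetric partner.

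\emph{Main obstacle.} The delicate point is not the real-part manipulations themselves but confirming that the nonassociative groupings on the right-hand side really do wash out under ${\rm Re}$. One must be careful that the identity $\overline{[x,y,z]}=-[x,y,z]$ only guarantees that a \emph{single} associator vanishes in real part; when two separate products $(a\bar c)(\bar d b)$ and $(ad)(cb)$ are each reassociated one must ensure no cross terms survive. I expect the cleanest route is to reduce everything to the polarized composition identities $\langle xy,zy\rangle=\langle x,z\rangle\|y\|^2$ and its companions, and then match coefficients; the identity then follows from pure linear algebra on the $8$-dimensional real inner product space ${\mathbb O}$, with no further appeal to the multiplication table. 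If that reduction proves stubborn, a fallback is to verify the bilinear identity by substituting basis elements $e_i$ using the Fano plane, which is finite but tedious; I would prefer the invariant argument via the composition law as it makes transparent \emph{why} the identity holds rather than merely that it does.
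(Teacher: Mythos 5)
Your preferred route---reducing the identity to the full polarization of the composition law---is correct and genuinely different from the paper's proof, and it does close: writing $\langle x,y\rangle={\rm Re}(x\bar y)$ and linearizing $\langle xu,xu\rangle=\langle x,x\rangle\langle u,u\rangle$ first in $x$ and then in $u$ gives the exchange identity
\[
\langle xu,yv\rangle+\langle xv,yu\rangle=2\,\langle x,y\rangle\,\langle u,v\rangle,
\]
and the choice $x=a$, $u=\bar c$, $y=\bar b$, $v=d$ yields exactly
\[
{\rm Re}\big((a\bar c)(\bar d b)\big)+{\rm Re}\big((ad)(cb)\big)=2\,{\rm Re}(ab)\,{\rm Re}(cd),
\]
using only $\overline{xy}=\bar y\,\bar x$ and ${\rm Re}(uv)=\langle u,\bar v\rangle$---notably, with no reassociation at all. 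The paper argues differently: it folds the left-hand side into ${\rm Re}\big(a\,(dc+\bar c\bar d)\,b\big)$, where the inner factor is the real scalar $2\,{\rm Re}(cd)$, moves one parenthesis on the right-hand side via ${\rm Re}\big((xy)z\big)={\rm Re}\big(x(yz)\big)$, and cancels the resulting discrepancy through $[d,c,b]+[\bar c,\bar d,b]=0$, a consequence of the alternating property and $[\bar x,y,z]=-[x,y,z]$. Your argument buys an invariant explanation valid in any composition algebra (the lemma \emph{is} the polarized composition law in disguise); the paper's buys a short self-contained computation from the associator identities it has already quoted.

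One intermediate claim in your plan is false and should be struck: inside a single ${\rm Re}$ one may \emph{not} drop all parentheses in quadruple products. Trace associativity ${\rm Re}\big((xy)z\big)={\rm Re}\big(x(yz)\big)$ splits the five bracketings of $wxyz$ into two classes---$\big((wx)y\big)z$, $(wx)(yz)$, $w\big(x(yz)\big)$ agree among themselves, as do $\big(w(xy)\big)z$ and $w\big((xy)z\big)$---but the two classes differ by ${\rm Re}\big([w,x,y]\,z\big)$, which is generically nonzero (take $z=\overline{[w,x,y]}$ to get $||[w,x,y]||^2$). You are saved only because both terms in the lemma have the middle form $(xy)(zw)$, so the reassociations you actually invoke stay within one class; on your preferred polarization route the issue never arises, since the exchange identity applies to the products exactly as they stand. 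I would recommend deleting the ``drop all parentheses'' step and running the polarization argument directly.
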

\begin{proof}
The left hand side can be rewritten as
\[
2{\rm Re}(ab){\rm Re}(cd)\!=\!2{\rm Re}(ab){\rm Re}(dc)\!=\!{\rm Re}\left(a(dc+\bar{c}\bar{d})b\right)\!=\!{\rm Re}\left(a\big((dc)b+(\bar{c}\bar{d})b\big)\right)
\]
while the right hand side is ${\rm Re}\left((a\bar{c})(\bar{d}b)+(ad)(cb)\right)\!=\!{\rm Re}\left(a\big(\bar{c}(\bar{d}b)+d(cb)\big)\right).$ Since
\[
(dc)b-d(cb)+(\bar{c}\bar{d})b-\bar{c}(\bar{d}b)=\big[d,c,b\big]+\big[\bar{c},\bar{d},b\big]=\big[d,c,b\big]-\big[d,c,b\big]=0,
\]
then clearly ${\rm Re}\left(a\big((dc)b+(\bar{c}\bar{d})b\big)\right)-{\rm Re}\left(a\big(\bar{c}(\bar{d}b)+d(cb)\big)\right)=0$, and this finishes the proof.
\end{proof}
\begin{lemma}\label{detfactor}
The expression in~\eqref{det-cond} is equal to
\begin{equation*}
\begin{split}
2\left(||a||^2||d||^2+||b||^2||c||^2\right)||x||^2+2{\rm Re}\left(\big(a(x\bar{d})\big)\big(d(\bar{x}\bar{a})\big)\right)+2{\rm Re}\left(\big(b(\bar{x}\bar{c})\big)\big(c(x\bar{b})\big)\right)
\\
-4{\rm Re}\left((a\bar{c})(d\bar{x})(x\bar{b})+(ax)(\bar{x}\bar{c})(d\bar{b})\right)+\big[a,d,x\big]\big[b,c,x\big]+\big[b,c,x\big]\big[a,d,x\big].
\qquad\quad
\end{split}
\end{equation*}
\end{lemma}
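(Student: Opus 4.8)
The plan is to exploit the conjugation symmetry of the four bracketed factors in~\eqref{det-cond} before expanding anything. Denote the expression by $PQ-RS$, where $P,Q,R,S$ are the four four-term sums in the order written. Using the conjugation rules $\overline{uv}=\bar v\,\bar u$ and $\overline{(uv)w}=\bar w(\bar v\,\bar u)$ one checks term by term that $Q=\bar P$, so the first product is $P\bar P=\|P\|^2$, a real number. The same bookkeeping shows that $R$ and $S$ are each fixed by conjugation, hence real; and since the real part of a triple product is insensitive to bracketing, every summand of $R$ contributes the same real part ${\rm Re}(ax\bar b)$, giving $R=4\,{\rm Re}(ax\bar b)$ and likewise $S=4\,{\rm Re}(cx\bar d)$. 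The whole expression therefore collapses to the real scalar $\|P\|^2-RS$, which is consistent with the (manifestly real) right-hand side.

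I would then compute the two pieces separately. Splitting $P=P_1+P_2$ with $P_1=(ax)\bar d+a(x\bar d)$ and $P_2=(b\bar x)\bar c+b(\bar x\bar c)$, write $\|P\|^2=\|P_1\|^2+\|P_2\|^2+2\,{\rm Re}(P_1\overline{P_2})$. The diagonal terms are controlled by multiplicativity of the norm, $\|u(vw)\|=\|u\|\,\|v\|\,\|w\|$, and by Artin's Theorem, which lets one collapse adjacent conjugate pairs $\bar x x=\|x\|^2$ inside two-generated subalgebras; this yields the pure-norm contribution $2(\|a\|^2\|d\|^2+\|b\|^2\|c\|^2)\|x\|^2$ together with the two retained real-part terms $2\,{\rm Re}\big((a(x\bar d))(d(\bar x\bar a))\big)$ and $2\,{\rm Re}\big((b(\bar x\bar c))(c(x\bar b))\big)$, each modulo explicit associator corrections. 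For the product $RS=16\,{\rm Re}(ax\bar b)\,{\rm Re}(cx\bar d)$ I would apply Lemma~\ref{lemma4oct}, which converts a product of two real parts into a sum of real parts of longer products; the resulting terms are exactly those appearing in the $-4\,{\rm Re}(\cdots)$ contribution on the right-hand side, plus a term that pairs against the cross term $2\,{\rm Re}(P_1\overline{P_2})$.

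What remains is to gather all the associator corrections and identify the residue. The crucial simplification comes from the fact that the associator is alternating (Section~\ref{prelim}): since conjugating any slot changes its sign and interchanging any two slots changes its sign, one gets the clean identities $[a,x,\bar d]=[d,\bar x,\bar a]=[a,d,x]$ and $[b,\bar x,\bar c]=[c,x,\bar b]=[b,c,x]$. Consequently every associator discrepancy that appears reorganizes in terms of the two octonions $[a,d,x]$ and $[b,c,x]$; and because these are imaginary, their symmetric combination is $[a,d,x][b,c,x]+[b,c,x][a,d,x]=2\,{\rm Re}\big([a,d,x][b,c,x]\big)$, which is precisely the last term of the claimed identity.

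The main obstacle is exactly this associator bookkeeping. The delicate point is that, unlike the triple case, the real part of a product of four or more octonions is invariant only under re-association at the outermost split, \emph{not} under arbitrary re-bracketing: whenever two differently associated factors are compared --- and this happens repeatedly in $\|P_1\|^2$, $\|P_2\|^2$, the cross term, and the output of Lemma~\ref{lemma4oct} --- a genuine correction of the form ${\rm Re}\big([\,\cdot,\cdot,\cdot\,]\,w\big)$ is generated. Showing that all of these corrections telescope, cancelling the spurious contributions and leaving only $2\,{\rm Re}\big([a,d,x][b,c,x]\big)$, is the heart of the argument; the Moufang identities and the associator relations $\overline{[x,y,z]}=-[x,y,z]$ and $[\bar x,y,z]=-[x,y,z]$ recorded in Section~\ref{prelim} are the tools that drive the cancellation.
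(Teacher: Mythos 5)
Your proposal is correct and takes essentially the same route as the paper's proof: the paper likewise splits the first product into the pure-norm terms plus the two retained real-part terms plus the cross products $P_1\overline{P_2}+P_2\overline{P_1}$, converts the second line of~\eqref{det-cond} into $-16\,{\rm Re}(ax\bar{b})\,{\rm Re}(cx\bar{d})$ and applies Lemma~\ref{lemma4oct}, and identifies the leftover cross terms with $[a,d,x][b,c,x]+[b,c,x][a,d,x]$, so your observations $Q=\bar{P}$ and $R,S\in{\mathbb R}$ are only a mild repackaging of the same computation. Two small points: the intermediate identity should read $[b,\bar{x},\bar{c}]=[c,x,\bar{b}]=-[b,c,x]$ (the sign self-corrects because the relevant difference is $c(x\bar{b})-(cx)\bar{b}=-[c,x,\bar{b}]$), and the Moufang-driven telescoping you anticipate never arises, since $\|P_1\|^2$ and $\|P_2\|^2$ are handled exactly by norm multiplicativity and the residue collapses to $2\,{\rm Re}\big(((ax)\bar{d}-a(x\bar{d}))(c(x\bar{b})-(cx)\bar{b})\big)$ by pure linearity.
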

\begin{proof}
The first line in~\eqref{det-cond} is equal to
\begin{equation}\label{firstline}
\begin{split}
2\left(||a||^2||d||^2+||b||^2||c||^2\right)||x||^2\qquad\qquad\qquad\qquad\qquad\qquad\qquad\qquad\;\\
+2{\rm Re}\left(\big(a(x\bar{d})\big)\big(d(\bar{x}\bar{a})\big)\right)+2{\rm Re}\left(\big(b(\bar{x}\bar{c})\big)\big(c(x\bar{b})\big)\right)\qquad\qquad\qquad\qquad\quad\;\\
+\left((ax)\bar{d}+a(x\bar{d})\right)\left(c(x\bar{b})+(cx)\bar{b}\right)+\left((b\bar{x})\bar{c}+b(\bar{x}\bar{c})\right)\left(d(\bar{x}\bar{a})+(d\bar{x})\bar{a}\right).
\end{split}
\end{equation}
Using the fact that ${\rm Re}\left(\left[a,x,\bar{b}\right]\right)=0$ and ${\rm Re}\left(\left[c,x,\bar{d}\right]\right)=0$, the second line in~\eqref{det-cond} is equal to
\begin{equation}\label{replacing}
-8\,{\rm Re}\left(a(x\bar{b})\right)\,{\rm Re}\left(c(x\bar{d})\right)
-8\,{\rm Re}\left((ax)\bar{b}\right)\,{\rm Re}\left((cx)\bar{d}\right).
\end{equation}
Applying the formula provided in Lemma~\ref{lemma4oct}, replacing $b$ and $d$ for $x\bar{b}$ and $x\bar{d}$ in the first term and replacing $a$ and $c$ for $ax$ and $cx$ in the second term, \eqref{replacing} can be rewritten as
\begin{equation}\label{secondline}
\begin{split}
-4\,{\rm Re}\left((a\bar{c})(d\bar{x})(x\bar{b})\right)-2\left((a(x\bar{d}))(c(x\bar{b}))+((b\bar{x})\bar{c})((d\bar{x})\bar{a})\right)\;\,\\
-4\,{\rm Re}\left((ax)(\bar{x}\bar{c})(d\bar{b})\right)-2\left(((ax)\bar{d})((cx)\bar{b})+(b(\bar{x}\bar{c}))(d(\bar{x}\bar{a}))\right).
\end{split}
\end{equation}
Adding the expressions from~\eqref{firstline} and~\eqref{secondline} yields the expected result.
\end{proof}
Close inspection of Lemma~\ref{detfactor} reveals that $\det(X^\prime)=\det(MM^\dagger)\det(X)$, where $M=\begin{pmatrix}a&b\\ c&d\end{pmatrix}$, only for the cases:
\begin{itemize}
\item
$a=0$ and $[b,c,x]=0$ $\forall x\in{\mathbb O}$ (analogously when $b=0$ and $[a,d,x]=0$ $\forall x\in{\mathbb O}$, $c=0$ and $[a,d,x]=0$ $\forall x\in{\mathbb O}$, $d=0$ and $[b,c,x]=0$ $\forall x\in{\mathbb O}$)
\item
$[u,v,x]=0$ $\forall u,v\in\{a,b,c,d\},\,\forall x\in{\mathbb O}$.
\end{itemize}
In either of these cases, it is true that $\det(MM^\dagger)=||ad-bc||^2$. Nevertheless, for these transformations to be invertible it is necessary for all octonions to share one imaginary direction (as in the second case). This is in complete agreement with the work of Manogue \& Schray, presented in~\cite{manogue-finite}, where they impose the same condition in order to guarantee the Hermitian action $X\mapsto MXM^\dagger$ is well defined for the octonionic case. This aspect, along with further discussions regarding when is ${\rm det}(MXM^\dagger)$ equal to ${\rm det}(MM^\dagger)\,{\rm det}(X)$ are also found in~\cite{manogue-moebius,manogue-spinor}.
\\

Requiring all four octonions to share their imaginary direction can be expressed in the following way. Consider eight real numbers, $\mu_i, \nu_i$ for indices $i=a,b,c,d$, and an octonion $q$ such that ${\rm Re}(q)=0$. Then the generating elements for $SL(2,{\mathbb O})$ are transformations $\phi_M$ where $M$ is of the form
\begin{equation}\label{genSL2O}
{
\begin{pmatrix}
\mu_a+\nu_a\,q&\quad&\mu_b+\nu_b\,q\\
\mu_c+\nu_c\,q&&\mu_d+\nu_d\,q\\
\end{pmatrix}
}_{.}
\end{equation}
These are the same transformations that are used in~\cite{manogue-finite,manogue-moebius,manogue-spinor} as elements in the generating set for $SL(2,{\mathbb O})$. The main result shall now be proven.
\begin{theorem}
The set of all invertible and determinant preserving transformations $\phi_M
\:\colon
{\mathfrak h}_2({\mathbb O})\to{\mathfrak h}_2({\mathbb O})$ defined as in~\eqref{def-trans-hermit} generate a free group with the operation given by composition. It is also a Lie group whose Lie algebra is precisely $\mathfrak{sl}(2,{\mathbb O})$.
\end{theorem}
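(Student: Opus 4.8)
The plan is to isolate the generating set, then establish the three assertions --- free group, Lie group, and Lie algebra $\mathfrak{sl}(2,\mathbb{O})$ --- in that order, leaning on the determinant computation of Lemma~\ref{detfactor}, the isomorphism $\varphi\colon\mathfrak{sl}(2,\mathbb{O})\to\mathfrak{so}(9,1)$ of Section~\ref{isoOct}, and Lemma~\ref{tangenG2}. The case analysis following Lemma~\ref{detfactor} already forces every invertible, determinant-preserving $\phi_M$ to have $M$ of the shared-direction form~\eqref{genSL2O} with $\det(MM^\dagger)=\|ad-bc\|^2=1$, so I would take exactly these as generators. For such an $M$ all four entries lie in a single commutative associative subalgebra $\mathbb{R}[q]\cong\mathbb{C}$, and Artin's Theorem collapses the two parenthesizations in~\eqref{def-trans-hermit}, giving $\phi_M(X)=MXM^\dagger$ unambiguously; the same argument shows $\phi_M^{-1}=\phi_{M^{-1}}$ with $M^{-1}$ again of the form~\eqref{genSL2O}. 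Thus each generator is invertible and the generating set is closed under inverses.

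For freeness I would combine the free-monoid property recorded earlier for the family $\{\phi_M\}$ with the failure of composition to simplify: $\phi_N\circ\phi_M$ agrees with a single transformation $\phi_{NM}$ only when $N$ and $M$ share the imaginary direction $q$, for only then do the associators coupling the entries of $N$, $M$ and a general $X$ vanish by Artin's Theorem. Two generators built on distinct imaginary directions therefore never amalgamate, so the only relations are the forced cancellations $\phi_M\circ\phi_{M^{-1}}=\mathrm{id}$ and every reduced word acts nontrivially, yielding a free group. I expect the delicate point here to be the bookkeeping that guarantees no unexpected coincidence occurs between reduced words, i.e. that distinct reduced words really do produce distinct transformations of $\mathfrak{h}_2(\mathbb{O})$.

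For the analytic assertions I would exploit that $\det(X)=Q(x)$ identifies $\mathfrak{h}_2(\mathbb{O})$ with ten-dimensional Minkowski space, so every $\phi_M$ is an $\mathbb{R}$-linear map preserving $Q$; since each generator is joined to the identity by a path of generators (the shared-direction matrices of determinant one form a connected set containing $I$), the group sits inside the connected Lorentz group $SO(9,1)$. The core of the proof is then to show that the tangent vectors at the identity to curves of transformations exhaust $\varphi(\mathfrak{sl}(2,\mathbb{O}))\subset\mathfrak{so}(9,1)$. Differentiating $\phi_{\exp(tN)}$ for shared-direction $N\in M_2'(\mathbb{O})$ gives the map $X\mapsto NX+XN^\dagger$, whose matrix is precisely $\varphi(N)$; letting $q$ range over the imaginary units recovers all of $M_2'(\mathbb{O})$, while the scalar curves $\phi_{e^{td}I}$ differentiate to the commutator maps $C_d$, with matrix $\varphi(C_d)$.

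The main obstacle --- and the conceptual payoff of the construction --- is the derivation summand $\mathfrak{Der}(\mathbb{O})\cong\mathfrak{g}_2$: a single $\phi_M$ can only generate commutator maps, never a genuine derivation, which is exactly why the rotations in $\mathfrak{so}(7)$ must be split and why one is forced to pass to nested (composed) transformations. Here I would invoke Lemma~\ref{tangenG2}: the curve $G_{a,b}^t$ of~\eqref{gut} is a composition of four generators, hence lies in the group, and its tangent vector at $t=0$ is the derivation $D_{a,b}$; choosing $a=e_i,\ b=e_j$ produces the spanning family $\{D_{e_i,e_j}\}$ for $\mathfrak{g}_2$. Assembling the three families, the tangent space at the identity contains $\varphi\big(M_2'(\mathbb{O})\oplus C(\mathbb{O}')\oplus\mathfrak{Der}(\mathbb{O})\big)=\varphi(\mathfrak{sl}(2,\mathbb{O}))$, a space of dimension $45=\dim\mathfrak{so}(9,1)$; since the group is contained in $SO(9,1)$ the reverse inclusion is automatic, and the dimension count forces equality. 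Finally the bracket is correct for free: $\varphi$ was defined precisely so that the matrix commutator on $\mathfrak{so}(9,1)$ pulls back to the bracket on $\mathfrak{sl}(2,\mathbb{O})$, so the identification of the tangent space as a subspace simultaneously identifies the Lie algebra, completing the proof.
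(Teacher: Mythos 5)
Your treatment of the substantive part of the theorem --- the Lie group structure and the identification of the Lie algebra --- follows essentially the same route as the paper: the same generating set~\eqref{genSL2O} extracted from the case analysis after Lemma~\ref{detfactor}; inverses within the generating set (the paper writes ${\phi_M}^{-1}=\phi_{(ad-bc)^{-1}\,{\rm id}}\circ\phi_{{\rm adj}(M)}$, which by exactly your Artin-subalgebra argument collapses, for shared-direction $M$, to your $\phi_{M^{-1}}$); differentiation of curves of generators to obtain $X\mapsto NX+XN^\dagger$, covering $M_2^\prime({\mathbb O})$ and $C({\mathbb O}^\prime)$ (the paper differentiates a general one-generator curve and reads off the tangent as a traceless matrix plus $C_{\frac{1}{2}(\dot a+\dot d)}$, then specializes; your $\phi_{\exp(tN)}$ and $\phi_{e^{td}I}$ curves are the same computation); and the identical four-fold composition of scalar generators $\phi_{(u(t))^{-1}\,{\rm id}}\circ\phi_{u(0)\,{\rm id}}\circ\phi_{u(t)\,{\rm id}}\circ\phi_{(u(0))^{-1}\,{\rm id}}$ together with Lemma~\ref{tangenG2} to produce the derivations $D_{a,b}$ spanning $\mathfrak{g}_2$. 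Your one genuine addition is the closing argument: determinant preservation realizes the group inside $O(9,1)$, connectedness of the generating family puts it in $SO(9,1)$, and the dimension count $45$ then forces the tangent space to be exactly $\varphi(\mathfrak{sl}(2,{\mathbb O}))$. That reverse containment is left implicit in the paper, which only exhibits that every vector of $\mathfrak{sl}(2,{\mathbb O})$ is tangent to some curve in the group; your version is a modest but real tightening.

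On freeness, however, your sketch contains an internal inconsistency that you should not expect the paper to resolve for you. The same Artin argument you invoke to get $\phi_M^{-1}=\phi_{M^{-1}}$ shows $\phi_N\circ\phi_M=\phi_{NM}$ whenever $N$ and $M$ share the imaginary direction $q$; this is already a nontrivial relation among generators, so your claim that ``the only relations are the forced cancellations'' and that every reduced word acts nontrivially is false as stated. Concretely, for a unit imaginary $q$ the matrix $M={\rm diag}(q,q)$ satisfies $\|ad-bc\|^2=\|q^2\|^2=1$, so $\phi_M$ is a generator; it acts nontrivially ($x\mapsto qx\bar q$ on the off-diagonal entries), yet $\phi_M\circ\phi_M=\phi_{M^2}=\phi_{-I}={\rm id}$, giving a torsion element --- and free groups are torsion-free. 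The ``delicate bookkeeping'' you flag is therefore not merely delicate but impossible in the literal sense of freeness on this generating set. Be aware that the paper's own proof is no stronger here: it verifies associativity, the identity, and inverses, and then simply declares ``we are in presence of a free group,'' so ``free'' there can only be read loosely (the group is generated by composition alone, with no further closure imposed). Relative to the paper your proposal proves exactly as much about this clause; the difference is that the paper asserts nothing beyond the group axioms, whereas your argument commits to specific claims (no amalgamation across distinct directions sufficing, nontriviality of all reduced words) that the example above refutes.
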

\begin{proof}
Composition of transformations is clearly an associative operation. The identity element corresponds to the identity matrix; $a=d=1$ and $b=c=0$. If the transformation $\phi_M$ is a generating element, $M$ shall be of the form given in~\eqref{genSL2O}. Explicit calculation shows that in this case
\[
{\phi_M}^{-1}=\phi_{(ad-bc)^{-1}\,{\rm id}}\circ\phi_{{\rm adj}(M)}
\]
where $\circ$ represents composition of transformations, ${\rm id}$ is the identity matrix in $M_2({\mathbb O})$, and ${\rm adj}(M)$ is the adjoint\footnote{Given $M$ is $2\times 2$, its adjoint matrix is well defined even while having octonionic entries and ${\rm adj}(M)=\begin{pmatrix}d&-c\\ -c&a\end{pmatrix}$.} matrix of $M$. The inverse element is also determinant preserving, given the fact that $||ad-bc||^2=1$ for the generating transformation. The inverse element of a composition of generating transformations follows the usual rule. Therefore, we are in presence of a free group.
\\

To show that this set is a Lie group, simply view these transformations as a subgroup of $GL(10, {\mathbb R})$ and consider the inherited differential structure. It is clear that both the maps that send two transformations to their composition or an element onto its inverse are continuous. Consider a curve of generating transformations passing through the identity at $t=0$; that is, an element whose entries are continuous\footnote{Since the norm in ${\mathbb O}$ is the same as in ${\mathbb R}^8$, endow the octonions with the metric topology of an Euclidean space of eight components.} functions from ${\mathbb R}$ in ${\mathbb O}$ such that $a(0)=d(0)=1$ and $b(0)=c(0)=0$. Recalling the formula given in~\eqref{def-trans-hermit}, the tangent vector along the given curve at the identity transforms elements in $\mathfrak{h}_2({\mathbb O})$ in the exact same fashion as $\mathfrak{so}(9,1)\cong\mathfrak{sl}(2,{\mathbb O})$,
\[
X\longmapsto N\,X+X\,N^\dagger
\]
where $N={\left.\frac{{\rm d}}{{\rm d}t}M(t)\right|}_{t=0}$ and $M(t)$ is the matrix related to the transformation.
\\

The next step is to exhibit that for any vector in $\mathfrak{sl}(2,{\mathbb O})$ there is an element in the group such that the vector is tangent to it. Given a curve, that passes through the identity for $t=0$, determined by only one generating transformation, the tangent vector at the identity is
\[
\begin{pmatrix}
\frac{1}{2}\left(\dot{a}-\dot{d}\right)&\dot{b}\\
\dot{c}&\frac{1}{2}\left(\dot{d}-\dot{a}\right)
\end{pmatrix}
+C_{\frac{1}{2}\left(\dot{d}+\dot{a}\right)}
\]
where the dot represents the derivative respect to $t$ evaluated at $t=0$. Recall that the first element is in $M^\prime({\mathbb O})$ and the second in $C({\mathbb O}^\prime)$, which are traceless $2\times2$ octonionic matrices and commutator maps respectively. Notice that the vectors in $\mathfrak{g}_2$ cannot appear as tangent to curves determined by just one generating transformation. Writing $d=-a$, all the vectors in $M^\prime({\mathbb O})$ are reached. On the other hand, imposing $b=0$, $c=0$, and $d=a$, all the vectors in $C({\mathbb O}^\prime)$ are reached. For the $\mathfrak{g}_2$ part of the algebra, composition of two elements are enough to form a basis. Nonetheless, according to the description given in~\eqref{gut} and Lemma~\ref{tangenG2}, the following composition of transformations
\[
\phi_{(u(t))^{-1}\,{\rm id}}\circ\phi_{u(0)\,{\rm id}}\circ\phi_{u(t)\,{\rm id}}\circ\phi_{(u(0))^{-1}\,{\rm id}}
\]
reproduces $G_{a,b}^t\in G_2$ and therefore the tangent vector at the identity is exactly the canonical derivation $D_{a,b}$ described in~\eqref{Dab}. Hence, this set of invertible and determinant preserving transformations from $\mathfrak{h}_2({\mathbb O})$ onto itself generate a Lie group whose Lie algebra is $\mathfrak{sl}(2,{\mathbb O})$.
\end{proof}
\section*{Acknowledgements}
Special thanks are given to my advisor, Professor Alvaro Restuccia, who has followed this work very closely as part of my Doctoral dissertation. Also, I would like to express my gratitude to the referee for many valuable suggestions that have improved the presentation of this paper.


\begin{thebibliography}{00}
\bibitem{sudbery}
Sudbery, A.
\newblock (1984),
\newblock ``Division algebras, (pseudo)orthogonal groups and spinors",
\newblock {\bf Journal of Physics. A. Mathematical and General},
\newblock Vol. 17, Issue 5, pp 939--955.
%
\bibitem{evans}
Evans, J. M.
\newblock (1988),
\newblock ``Supersymmetric Yang-Mills theories and division algebras",
\newblock {\bf Nuclear Physics},
\newblock B298:92-108
%
\bibitem{manogue-finite}
Manogue, Corinne A. and Schray, J{\"o}rg
\newblock (1993),
\newblock ``Finite Lorentz transformations, automorphisms, and division algebras",
\newblock {\bf Journal of Mathematical Physics},
\newblock Vol. 34, Issue 8, pp 3746--3767.
%
\bibitem{manogue-moebius}
Manogue, Corinne A. and Dray, Tevian
\newblock (1999),
\newblock ``Octonionic M\"obius Transformations",
\newblock {\bf Modern Physics Letters A},
\newblock Vol. 14, Issue 19, pp 1243--1255.
%
\bibitem{manogue-spinor}
Dray, Tevian and Manogue, Corinne A.
\newblock (2010),
\newblock ``Octonionic Cayley Spinors and $E_6$",
\newblock {\bf Commentationes Mathematicae Universitatis Carolinae},
\newblock Vol. 51, Issue 2, pp 193--207.
%
\bibitem{conway}
Conway J. H. and Smith D. A.
\newblock (2003),
\newblock {\bf On quaternions and octonions: their geometry, arithmetic, and symmetry},
\newblock A K Peters Ltd.,
\newblock Natick, MA.
%
\bibitem{schafer}
Schafer R. D.
\newblock (1966),
\newblock {\bf An introduction to nonassociative algebras},
\newblock Pure and Applied Mathematics, Vol. 22
\newblock Academic Press, New York.
%
\bibitem{baez}
Baez J.
\newblock (2002),
\newblock ``The Octonions",
\newblock {\bf American Mathematical Society. Bulletin. New Series},
\newblock Vol. 39, No. 2, pp 145--205.
\end{thebibliography}
\end{document}